\documentclass[12pt]{amsart}

\usepackage{amsmath, amssymb, graphics}

\newcommand{\mathsym}[1]{{}}
\newcommand{\unicode}[1]{{}}
\vfuzz2pt \hfuzz2pt
\newtheorem{thm}{Theorem}[section]

\newtheorem{lem}[thm]{Lemma}
\newtheorem{prop}[thm]{Proposition}
\theoremstyle{definition}

\newtheorem{defn}[thm]{Definition}
\newtheorem{rem}[thm]{Remark}

\newtheorem*{defn*}{Definition}
\newtheorem*{rems*}{Remarks}
\newtheorem*{rem*}{Remark}
\newtheorem{ex}[thm]{Example}

\numberwithin{equation}{section}

\begin{document}

\title[Singular symplectic forms] {On local invariants of singular symplectic forms. }
\author{Wojciech Domitrz}
\address{Warsaw University of Technology\\ Faculty of Mathematics and Information
Science\\
ul. Koszykowa 75\\ 00-662 Warszawa\\ Poland}
\email{domitrz@mini.pw.edu.pl}
\thanks{The research was supported by NCN grant no. DEC-2013/11/B/ST1/03080.}
\keywords{Singularities; Symplectic Geometry; Normal forms; Local invariants.}
\date{}

\maketitle

\begin{abstract}
We find a complete set of local invariants of singular symplectic forms with the structurally stable Martinet hypersurface on a $2n$-dimensional  manifold. In the $\mathbb C$-analytic category this set consists of the Martinet hypersurface $\Sigma _2$, the restriction of the singular symplectic form $\omega$ to $T\Sigma_2$ and the kernel of $\omega^{n-1}$ at the point $p\in \Sigma_2$.  In the $\mathbb R$-analytic and smooth categories this set contains one more invariant: the canonical orientation of $\Sigma_2$. We find the conditions to determine  the kernel of $\omega^{n-1}$ at $p$ by the other invariants. In dimension $4$ we find sufficient conditions to determine the equivalence class of a singular symplectic form-germ with the structurally smooth Martinet hypersurface by the Martinet hypersurface and the restriction of the singular symplectic form to it.
We also study the singular symplectic forms with singular Martinet hypersurfaces. We prove that the equivalence class of such singular symplectic form-germ is determined by the Martinet hypersurface, the canonical orientation of its regular part and the restriction of the singular symplectic form to its regular part if the Martinet hypersurface is a quasi-homogeneous hypersurface with an isolated singularity.
\end{abstract}

\section{Introduction.}
A closed differential $2$-form $\omega$ on a $2n$-dimensional
smooth manifold $M$ is {\bf symplectic} if $\omega$ is
nondegenerate. This means that $\omega$ satisfies the following
condition
\begin{equation}\label{Darboux}
\omega^n|_p=\omega\wedge\cdots\wedge\omega|_p\ne 0, \ \text {for}
\ p\in M.
\end{equation}

A closed differential $2$-form $\omega$ on a $2n$-dimensional
smooth manifold $M$ is called a {\bf singular symplectic} form if
the set of points where $\omega$ does not satisfy (\ref{Darboux}):
\begin{equation}\label{Martinet}
\left\{p\in M:\omega^n|_p=0\right\}.
\end{equation}
is nowhere dense. We denote the set (\ref{Martinet}) by
$\Sigma_2(\omega)$ or $\Sigma_2$. It is called the {\bf Martinet
hypersurface}.

Singular symplectic forms appear naturally
 if one studies classification of germs of submanifolds of a symplectic manifold.
 By Darboux-Givental theorem (\cite{ArGi}, see also \cite{DJZ2})  germs of submanifolds of the symplectic manifold
 are symplectomorphic iff the restrictions of the symplectic form to them are diffeomorphic.
 This theorem reduces the problem of local classification of generic submanifolds of the symplectic manifold to the problem of
 local classification of singular symplectic forms.
 
Singular symplectic forms can be applied in thermodynamics: in the modeling the absolute zero temperature region (see \cite{Ja2}). The Martinet $\Sigma_{20}$ singular symplectic form gives a fine link between the thermodynamical postulate of positivity of absolute temperature and the stability of an applicable structure of thermodynamics  (\cite{JK}).

By the classical Darboux theorem all symplectic forms on $M$ are
locally diffeomorphic i.e. there exists a diffeomorphism-germ of
$M$ mapping the germ of one symplectic form to the germ of the
other.

This is no longer true if we consider singular symplectic forms.
It is obvious that if germs of singular symplectic forms
$\omega_1$ and $\omega_2$ are diffeomorphic then the germs of
corresponding Martinet hypersurfaces $\Sigma_2(\omega_1)$ and
$\Sigma_2(\omega_2)$ must be diffeomorphic and the restrictions of
germs of singular symplectic forms $\omega_1$ and $\omega_2$ to
the regular parts of $\Sigma_2(\omega_1)$ and $\Sigma_2(\omega_2)$
respectively must be diffeomorphic too.

In this paper we study if the inverse theorem is valid:

{\it Do the Martinet hypersurface $\Sigma_2$ and the restriction
of $\omega$ to the regular part of $\Sigma_2$ form a complete set
of invariants of $\omega$?}

Because our consideration is local, we may assume that $\omega$ is
 a $\mathbb K$-analytic or smooth closed $2$-form-germ at $0$ on
$\mathbb K^{2n}$ for $\mathbb K= \mathbb R$ or $\mathbb K= \mathbb
C$.

Then $\omega^n=f\Omega$, where $f$ is a function-germ at $0$ and
$\Omega$ is the germ at $0$ of a volume form on $\mathbb K^{2n}$.
The Martinet hypersurface has the form
$\Sigma_2=\left\{f=0\right\}$ and it is a called {\bf structurally
smooth at $0$} if $f(0)=0$ and $df_0 \ne 0$. Then $\Sigma_2$ is
 a smooth hypersurface-germ. In dimension $4$ such situation
is generic.

The starting point of this paper is the articles  \cite{JZ1} and
\cite{JZ2} where the similar problems where concerned for singular
contact structures. B. Jakubczyk and M. Zhitomirskii show that
local $\mathbb C$-analytic singular contact structures on $\mathbb
C^3$ with structurally smooth Martinet hypersurfaces $S$ are
diffeomorphic if their Martinet hypersurfaces and restrictions of
singular structures to them are diffeomorphic. In the $\mathbb
R$-analytic category a complete set of invariants contains, in
general, one more independent invariant. It is  a canonical
orientation on the Martinet hypersurface. The same is true for
smooth local  singular contact structures $P=(\alpha)$ on $\mathbb
R^3$ provided $\alpha|_S$ is either not flat at $0$ or
$\alpha|_S=0$. The authors also study local singular contact
structures in higher dimensions. They find more subtle invariants
of a singular contact structure $P=(\alpha)$ on $\mathbb K^{2n+1}$
: a line bundle $L$ over the Martinet hypersurface $S$, a
canonical partial connection $\Delta_0$ on the line bundle $L$ at
$0\in \mathbb K^{2n+1}$  and a $2$-dimensional kernel
$ker(\alpha\wedge(d\alpha)^n)|_0$. They also consider the more
general case when $S$ has singularities.

For the first occurring singularities of singular symplectic forms
on a $4$-dimensional manifold the answer for the above question
follows from Martinet's normal forms ( see \cite{Martinet},
\cite{Roussarie}, \cite{G_T} ). In fact it is proved that the
Martinet hypersurface $\Sigma_2$ and a characteristic line field
on $\Sigma_2$ (i.e. $\{X \ \text{is a smooth vector field} :
X\rfloor (\omega|_{T\Sigma_2})=0\}$) form a complete set of
invariants of generic singularities of singular symplectic forms on a $4$-dimensional manifold.

In this paper we show that a complete set of invariants for 
$\mathbb C$-analytic singular symplectic form-germs on $\mathbb C^{2n}$
with structurally smooth Martinet hypersurfaces consists of the
Martinet hypersurface, the pullback of the singular form-germ $\omega$ to it and
the $2$-dimensional kernel of $\omega^{n-1}|_0$ (Theorem
\ref{inv-C}). The same is true for local $\mathbb R$-analytic
and smooth singular symplectic forms on $\mathbb R^{2n}$ with
structurally smooth Martinet hypersurfaces if we include in the set of
invariants the canonical orientation of the Martinet hypersurface
(Theorem \ref{inv-R}).

In section \ref{deter} we also prove that an equivalence class of
a smooth or $\mathbb K$-analytic singular symplectic form-germ $\omega$
on $\mathbb K^{2n}$ with the structurally smooth Martinet
hypersurface is determined only by the Martinet hypersurface, its
canonical orientation ( only if $\mathbb K=\mathbb R$ ) and the
pullback of the singular form-germ to it if the dimension of a vector space generated by the coefficients of the $1$-jet at $0$ of $(\omega|_{T\Sigma_2})^{n-1}$ is equal to $2$.

In section \ref{deter4} we consider singular symplectic forms on
$\mathbb K^4$ with structurally smooth Martinet hypersurfaces.
 We
show that an equivalence class of a smooth or $\mathbb K$-analytic singular
symplectic form $\omega$ on $\mathbb K^4$ with a structurally
smooth Martinet hypersurface is determined only by the Martinet
hypersurface and the pullback of the singular form to it if the two generators of the ideal generated by  coefficients of $\omega|_{T\Sigma_2}$ form a regular sequence.

In $\mathbb C$-analytic category we prove the same result for a wider class of singular symplectic forms.
The analogous result in $\mathbb R$-analytic category requires the assumption on the canonical orientation.
The preliminary versions of results of section \ref{deter4} were presented in \cite{4dim}
(Theorems \ref{C-ana}, \ref{R-ana}, Proposition \ref{propC}).

We also consider singular symplectic forms with singular Martinet
hypersurfaces. We prove that if the Martinet hypersurface of a
singular symplectic form-germ is  a quasi-homogeneous
hypersurface-germ with an isolated singularity then the complete set of
local invariants of this singular form consists of the canonical
orientation of the regular part of the Martinet hypersurface (for
$\mathbb K=\mathbb R$ only) and the restriction of the singular
form to the regular part of the Martinet hypersurface.

{\bf Acknowledgement.} The author wishes to express his thanks to
B. Jakubczyk and M. Zhitomirskii for many helpful
conversations and remarks during writing this paper.

\section{The complete set of invariants for singular symplectic
forms with structurally smooth Martinet hypersurfaces.}

\subsection{The kernel of $\omega^{n-1}|_0$}

The kernel of $\omega^{n-1}|_0$ is the following $2$-dimensional
subspace of $T_0\mathbb K^{2n}$
$$
\ker \left(\omega^{n-1}|_0\right)=\{v\in T_0\mathbb K^{2n}: v\rfloor
\left(\omega^{n-1}|_0\right)=0\}
$$
The kernel $\ker \left(\omega^{n-1}|_0\right)$ can be also described as a kernel of a
$(2n-3)$-form on $\Sigma_2$. Let $Y$ be a vector field-germ on
$\mathbb K^{2n}$ that is transversal to $\Sigma_2$ at $0$. Let
$\iota :\Sigma_2\hookrightarrow \mathbb K^{2n}$ be the inclusion.
Then the kernel of $\iota^{\ast}(Y\rfloor\omega^{n-1})|_0$ is
equal to $\ker \omega^{n-1}|_0$.

\subsection{The canonical orientation of $\Sigma_2$}

 In $\mathbb R$-analytic and smooth categories there is
one more invariant in general. This is a {\it canonical
orientation of $\Sigma_2$}.  The orientation may be defined
invariantly. Let $\omega$ be a singular symplectic
form-germ on $\mathbb R^{2n}$ with a structurally smooth Martinet
hypersurface $\Sigma_2$ at $0$. Then $\Sigma_2=\{f=0\}$ and
$df|_0\ne 0$. We define the volume form $\Omega_{\Sigma_2}$ on
$\Sigma_2$ which determines the orientation of $\Sigma_2$ in the
following way
$$
df\wedge\Omega_{\Sigma_2} =\frac{\omega^n}{f}.
$$
If $f$ is singular at $0$  (see Section \ref{singular}) then we  define the canonical orientation on the regular part of $\Sigma_2=\{f=0\}$

This definition is analogous to the definition in \cite{JZ1}
proposed by V. I. Arnold. It is easy to see that this definition
of the orientation does not depend on the choice of $f$ such that
$\Sigma_2=\{f=0\}$ and $df|_0\ne 0$.
  We call this
orientation of $\Sigma_2$ the {\it canonical orientation of
$\Sigma_2$}.

\begin{ex}
Let $\omega_{0}$, $\omega_{1}$ be germs of the following singular
symplectic forms on $\mathbb K^4$
$$
\omega_{0}=d(p_1(dx-zdy))+xdx\wedge dy, \
\omega_{1}=d(p_1(dy+zdx))+xdx\wedge dy
$$
in the coordinate system  $(p_1,x,y,z)$ on $\mathbb K^4$.

It is easy to see that $\omega_0^2=\omega_1^2=2p_1dp_1\wedge
dx\wedge dy\wedge dz$. Thus
$\Sigma_2=\Sigma_2(\omega_0)=\Sigma_2(\omega_1)=\{p_1=0\}$,
$\sigma=\iota^{\ast}\omega_0=\iota^{\ast}\omega_1=xdx\wedge dy$
and the canonical orientations of $\Sigma_2$ are the same for
$\omega_0$ and $\omega_1$.

But the kernels of $\omega_0|_0$ and $\omega_1|_0$ are different.
One can check that
$$
\ker
(\omega_0|_0)=\ker (dp_1\wedge dx)|_0=\text{span}\{\frac{\partial}{\partial
y}|_0,\frac{\partial}{\partial z}|_0\}
$$
and
$$
\ker
(\omega_1|_0)=\ker (dp_1\wedge dy)|_0=\text{span}\{\frac{\partial}{\partial
x}|_0,\frac{\partial}{\partial z}|_0\}
$$

Let $\Sigma_{22}=\{(x,y,z)\in\Sigma_2: \sigma_{(x,y,z)}=0\}$. It
is easy to see that $\Sigma_{22}=\{(x,y,z)\in\Sigma_2: x=0\}$.

Then $\ker (\omega_0|_0)$ is tangent to
$\Sigma_{22}$ and $\ker (\omega_1|_0)$ is
transversal to $\Sigma_{22}$. Therefore $\omega_0$ and $\omega_1$
are not equivalent.
\end{ex}
\subsection{Main theorems for  structurally smooth Martinet
hypersurfaces.}

In the $\mathbb C$-analytic category $\omega$ is determined by the
restriction to $T\Sigma_2$ and the $2$-dimensional
kernel of $\omega^{n-1}|_0$.

\begin{thm}
\label{inv-C} Let $\omega_0$ and $\omega_1$ be germs of  $\mathbb
C$-analytic singular symplectic forms on $\mathbb C^{2n}$ with a
common structurally smooth Martinet hypersurface $\Sigma_2$ at $0$
and $\text{rank}(\iota^{\ast}\omega_0|_{0})=\text{rank}(\iota^{\ast}\omega_1|_0) \le
2n-4$.

If $\iota^{\ast}\omega_0=\iota^{\ast}\omega_1$ and $\ker \omega_0^{n-1}|_0=
\ker\omega_1^{n-1}|_0$ then  there exists  a $\mathbb C$-analytic
diffeomorphism-germ $\Psi :(\mathbb C^{2n},0)\rightarrow (\mathbb
C^{2n},0)$ such that
\[
\Psi ^{\ast }\omega_1 =\omega_0.
\]
\end{thm}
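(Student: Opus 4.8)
The plan is to use Moser's homotopy (path) method, adapted to the degeneracy along $\Sigma_2$. First I would connect the two forms by the affine family $\omega_t=\omega_0+t(\omega_1-\omega_0)$, $t\in[0,1]$, and verify that every $\omega_t$ is again a $\mathbb C$-analytic singular symplectic germ sharing the same structurally smooth Martinet hypersurface $\Sigma_2=\{f=0\}$, the same pullback $\iota^{\ast}\omega_t=\iota^{\ast}\omega_0$, and the same rank along $\Sigma_2$. The equality $\iota^{\ast}\omega_0=\iota^{\ast}\omega_1$ immediately gives $\iota^{\ast}\omega_t=\iota^{\ast}\omega_0$, so the restrictions never change along the path. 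The role of the kernel hypothesis is precisely to control the behaviour at the origin: at a Martinet point the $2$-dimensional kernel of $\omega^{n-1}$ coincides with $\ker(\omega|_0)$, so from $\ker\omega_0^{n-1}|_0=\ker\omega_1^{n-1}|_0=:K_0$ we get $K_0\subseteq\ker(\omega_t|_0)$ for every $t$. Hence $0$ remains a point of the correct (corank $2$) degeneracy locus of $\omega_t$ and the Martinet hypersurface does not move off $0$ as $t$ varies; without this equality an intermediate $\omega_t$ could become nondegenerate at $0$.

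A second preliminary observation I would record is that the hypothesis $\mathrm{rank}(\iota^{\ast}\omega_i|_0)\le 2n-4$ is equivalent to the tangency $\ker(\omega_i|_0)\subseteq T_0\Sigma_2$: on the hyperplane $H=T_0\Sigma_2$ a $2$-form of corank $2$ on $T_0\mathbb C^{2n}$ restricts to a form whose corank on $H$ equals $2n-1-\mathrm{rank}$, which must be odd, hence equals $3$ exactly when the full $2$-dimensional kernel lies in $H$ and equals $1$ otherwise. By semicontinuity this tangency persists along $\Sigma_2$ near $0$ for $\omega_0$ and $\omega_1$, and hence for the whole family $\omega_t$.

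Next I would apply the relative Poincaré lemma: since $\omega_1-\omega_0$ is closed and $\iota^{\ast}(\omega_1-\omega_0)=0$, there is a $\mathbb C$-analytic $1$-form germ $\alpha$ with $d\alpha=\omega_1-\omega_0$ and $\iota^{\ast}\alpha=0$, and I would arrange $\alpha$ to vanish at $0$. Moser's scheme then reduces the theorem to solving the homological equation $X_t\rfloor\omega_t=-\alpha$ for a $\mathbb C$-analytic time-dependent vector field $X_t$ with $X_t(0)=0$: its flow $\Psi_t$ then fixes $0$ and satisfies $\frac{d}{dt}\Psi_t^{\ast}\omega_t=\Psi_t^{\ast}\big(d(X_t\rfloor\omega_t)+d\alpha\big)=0$, so that $\Psi:=\Psi_1$ gives $\Psi^{\ast}\omega_1=\omega_0$.

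The hard part will be the analytic solvability of $X_t\rfloor\omega_t=-\alpha$ across the degeneracy locus $\Sigma_2$. Off $\Sigma_2$ the form $\omega_t$ is nondegenerate and $X_t$ is uniquely and analytically determined; the issue is extension across $\Sigma_2$, where the operator $X\mapsto X\rfloor\omega_t$ drops rank by $2$. The necessary compatibility condition is that $\alpha$ annihilate $\ker(\omega_t|_p)$ for $p\in\Sigma_2$, and this holds for free, since $\ker(\omega_t|_p)\subseteq T_p\Sigma_2$ by the tangency above while $\alpha$ annihilates $T_p\Sigma_2$. I would then convert this compatibility into genuine $\mathbb C$-analyticity by a division argument: writing the equation by Cramer's rule, the components of $X_t$ are quotients of cofactors by the Pfaffian of $\omega_t$, which is a unit times $f$, and the compatibility together with the kernel-tangency forces the required divisibility by $f$, while the vanishing of $\alpha$ at $0$ yields $X_t(0)=0$. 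This division step, carried out uniformly in $t$, is where the structurally smooth structure and all three hypotheses are genuinely used, and it is the main obstacle.
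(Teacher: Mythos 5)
Your proposal has a genuine gap at its central step: you never prove---and cannot prove---that the straight-line family $\omega_t=\omega_0+t(\omega_1-\omega_0)$ stays inside the class of singular symplectic forms with Martinet hypersurface $\Sigma_2$. Your kernel argument controls only the single point $0$: from $K_0\subseteq\ker(\omega_t|_0)$ you get $0\in\Sigma_2(\omega_t)$, which says nothing about the germ $\Sigma_2(\omega_t)$ coinciding with $\{f=0\}$, nor about $\omega_t$ being nondegenerate off $\Sigma_2$; and Moser's division (Pfaffian $=$ unit $\cdot f$) collapses exactly when this fails. It does fail under the hypotheses of the theorem. Take $n=2$ and the complexification of the paper's Example~\ref{orientation}: $\omega_i=d(p_1\alpha_i)+\pi^{\ast}\sigma$ with $\alpha_0=dz+x\,dy$, $\alpha_1=\alpha_0+r\,(a\,dx-b\,dy)$, $a=a_1x+a_2y$, $b=-\tfrac{a_2}{2}x$, $\sigma=x\,\alpha_0\wedge(a\,dx-b\,dy)$ and $ra_2=4$. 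Both forms are closed, have the common structurally smooth Martinet hypersurface $\{p_1=0\}$, the same pullback $\sigma$ with $\mathrm{rank}(\sigma|_0)=0=2n-4$, and the same kernel at $0$ (since $\alpha_1|_0=\alpha_0|_0=dz$), so every hypothesis of Theorem~\ref{inv-C} holds. But $\alpha_{1/2}=\tfrac12(\alpha_0+\alpha_1)$ is a \emph{closed} $1$-form (because $ra_2=4$), hence $\omega_{1/2}^2=2p_1\,dp_1\wedge\pi^{\ast}(\alpha_{1/2}\wedge d\alpha_{1/2})\equiv 0$: the midpoint of your path is degenerate everywhere, it is not a singular symplectic form at all, and the homological equation $X_t\rfloor\omega_t=-\alpha$ has no solution near $t=1/2$. (The two forms \emph{are} equivalent, as the theorem asserts, just not by a diffeomorphism obtained from this path.)

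There is also a structural reason the argument cannot be completed as written: no step of it uses $\mathbb C$-analyticity---the relative Poincar\'e lemma, Cramer's rule and the division work verbatim over $\mathbb R$---yet the statement is false over $\mathbb R$ without the orientation hypothesis (Theorem~\ref{inv-R} and Example~\ref{orientation}). Any correct proof must use the complex field precisely where the path can degenerate, and that is what the paper does: it first normalizes both germs to $d(p_1\pi^{\ast}\alpha_i)+\pi^{\ast}\sigma$ (Theorem~\ref{4-dim}(a), via the relative Poincar\'e lemma and the relative Moser argument of Proposition~\ref{rel-Darboux}), and only then runs Moser along $\alpha_t=t\alpha_1+(1-t)\alpha_0$; the kernel hypothesis yields $\alpha_1|_0\wedge\alpha_0|_0\wedge\sigma^{n-2}|_0=0$, and after the complex rescaling $p_1\mapsto Bp_1$ (Remark~\ref{C-orient}) the coefficient $tA+(1-t)$ governing $\Sigma_2(\omega_t)$ is kept nonzero for all $t\in[0,1]$. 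This is exactly the control your proposal lacks. Two secondary errors: your semicontinuity claim is backwards---the tangency $\ker(\omega_i|_p)\subseteq T_p\Sigma_2$ is equivalent to $\mathrm{rank}(\iota^{\ast}\omega_i|_p)\le 2n-4$, which for typical germs (e.g.\ Martinet's $\Sigma_{220}$ forms) fails at generic $p\in\Sigma_2$ near $0$---and the compatibility argument needs a primitive vanishing identically on $T_{\Sigma_2}\mathbb C^{2n}$, which you can arrange by replacing $\alpha=g\,df+f\beta$ with $f(\beta-dg)$; but neither repair closes the main gap.
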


In $\mathbb R$-analytic and smooth categories $\omega$ is
determined by the restriction to $T\Sigma_2$, the $2$-dimensional
kernel of $\omega^{n-1}|_0$ and the canonical
orientation of $\Sigma_2$.

\begin{thm}
\label{inv-R} Let $\omega_0$ and $\omega_1$ be germs of  smooth
($\mathbb R$-analytic) singular symplectic forms on $\mathbb
R^{2n}$ with a common structurally smooth Martinet hypersurface
$\Sigma_2$ at $0$  and
$\text{rank}(\iota^{\ast}\omega_0|_{0})=\text{rank}(\iota^{\ast}\omega_1|_0) \le
2n-2$.

If the canonical orientations defined by $\omega_0$ and $\omega_1$ are the same, $\iota^{\ast}\omega_0=\iota^{\ast}\omega_1$ and $\ker \omega_0^{n-1}|_0=
\ker\omega_1^{n-1}|_0$ then
there exists a smooth ($\mathbb R$-analytic)
diffeomorphism-germ $\Psi :(\mathbb R^{2n},0)\rightarrow (\mathbb
R^{2n},0)$ such that
\[
\Psi ^{\ast }\omega_1 =\omega_0.
\]
\end{thm}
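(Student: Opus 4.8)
The plan is to use the standard Moser-type homotopy method (the "path method"), which is the natural tool for proving two closed 2-forms are diffeomorphic when they agree in the relevant sense. We connect $\omega_0$ and $\omega_1$ by the linear path $\omega_t = (1-t)\omega_0 + t\omega_1 = \omega_0 + t(\omega_1-\omega_0)$ for $t \in [0,1]$, and seek a family of diffeomorphism-germs $\Psi_t$ with $\Psi_0 = \mathrm{id}$ and $\Psi_t^*\omega_t = \omega_0$. Differentiating in $t$ reduces the problem to solving the homological equation $\mathcal{L}_{X_t}\omega_t = -(\omega_1 - \omega_0)$ for a time-dependent vector field $X_t$, where $\Psi_t$ is the flow of $X_t$. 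Since $\omega_1 - \omega_0$ is closed, Poincaré's lemma (in the $\mathbb{K}$-analytic or smooth category) gives $\omega_1 - \omega_0 = d\beta$ for a 1-form-germ $\beta$, and using Cartan's formula $\mathcal{L}_{X_t} = d\,\iota_{X_t} + \iota_{X_t}\,d$ together with $d\omega_t = 0$, the equation becomes $d(\iota_{X_t}\omega_t) = -d\beta$. It then suffices to solve the algebraic equation $\iota_{X_t}\omega_t = -\beta$ pointwise.

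So let me break down the key steps. First, I would establish that $\beta$ can be chosen so that $\beta|_0 = 0$; this is possible precisely because the two hypotheses $\iota^*\omega_0 = \iota^*\omega_1$ and $\ker\omega_0^{n-1}|_0 = \ker\omega_1^{n-1}|_0$ control the 1-jet of the difference $\omega_1 - \omega_0$ along $\Sigma_2$ and at $0$. The condition $\iota^*\omega_0 = \iota^*\omega_1$ forces $\omega_1 - \omega_0$ to vanish on $T\Sigma_2$, so that a primitive $\beta$ can be taken flat enough on $\Sigma_2$; the kernel condition handles the remaining transversal directions at the origin. Second, and this is the crux, I must solve $\iota_{X_t}\omega_t = -\beta$ for $X_t$, which requires inverting the map $v \mapsto \iota_v \omega_t$. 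Away from $\Sigma_2$ the form $\omega_t$ is nondegenerate so this is immediate, but on $\Sigma_2$ the form degenerates, so $X_t$ exists only if $-\beta$ lies in the image of $\iota_{(\cdot)}\omega_t$ at every point. This is an exact-division / integrability problem: one must show that the components of $\beta$ are divisible by the appropriate degeneracy factor (the defining function $f$ of $\Sigma_2$ or related quantities) with the quotient remaining of class $\mathbb{K}$-analytic or smooth.

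\textbf{The main obstacle} is precisely this division step — ensuring that $X_t$ extends across the Martinet hypersurface $\Sigma_2$ where $\omega_t$ is degenerate. The rank hypothesis $\mathrm{rank}(\iota^*\omega_i|_0) \le 2n-4$ (resp.\ $\le 2n-2$ in the real case) is exactly what guarantees enough corank so that, after the normalizations above, $\beta$ vanishes to sufficient order along $\Sigma_2$ for the quotient $\iota_{X_t}\omega_t = -\beta$ to be solved with a genuine vector field-germ. I would verify that $\omega_t$ has the same Martinet hypersurface $\Sigma_2$ and the same kernel data for all $t$ — this uses linearity of the path together with the shared hypotheses — so that the degeneracy locus does not move with $t$, which is what makes the pointwise inversion uniform in $t$. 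Integrating the resulting $X_t$ (with $X_t|_0 = 0$, so the flow fixes the origin) yields the desired diffeomorphism-germ $\Psi = \Psi_1$.

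\textbf{On the real case specifically}, the additional hypothesis that the canonical orientations of $\Sigma_2$ defined by $\omega_0$ and $\omega_1$ agree is what rules out the orientation-reversing obstruction illustrated by the worked example: without it, the sign of $\omega^n/f$ near $\Sigma_2$ could differ, and no $X_t$ flowing to a genuine (orientation-preserving) diffeomorphism could identify the two germs. In the $\mathbb{C}$-analytic setting this invariant is absent because orientation is not an issue over $\mathbb{C}$, which is why Theorem~\ref{inv-C} omits it; thus the real proof follows the same Moser scheme with the orientation hypothesis inserted precisely to guarantee that the division quotient has the correct sign for $X_t$ to integrate to a diffeomorphism-germ rather than merely a local homeomorphism.
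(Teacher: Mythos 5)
Your overall scheme (Moser homotopy on the linear path $\omega_t=(1-t)\omega_0+t\omega_1$) is the same engine the paper uses, but your proof has a genuine gap at exactly the point you defer: verifying that $\Sigma_2(\omega_t)=\Sigma_2$ remains the structurally smooth Martinet hypersurface for \emph{all} $t\in[0,1]$. This is not a routine consequence of ``linearity of the path together with the shared hypotheses''; it is the heart of the proof, and it requires a computation your write-up never performs. Using the relative Poincar\'e lemma (which needs only $\iota^{\ast}\omega_0=\iota^{\ast}\omega_1$) one can write $\omega_i=d(p_1\pi^{\ast}\alpha_i)+\pi^{\ast}\sigma+d(p_1^2\theta_i)$, so that $\omega_t^n=p_1w_t\Omega$ with $w_t(0)$ proportional to $\left(\alpha_t\wedge d\alpha_t\wedge\sigma^{n-2}\right)|_0$, where $\alpha_t=(1-t)\alpha_0+t\alpha_1$. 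A priori this is \emph{quadratic} in $t$, and a quadratic with positive endpoints can vanish at an interior $t$, at which point the homological equation becomes unsolvable and the whole method collapses. The paper's argument (inside Theorem \ref{4-dim}(b)) shows that the kernel hypothesis $\ker\omega_0^{n-1}|_0=\ker\omega_1^{n-1}|_0$, which translates into $\alpha_1|_0\wedge\alpha_0|_0\wedge\sigma^{n-2}|_0=0$, makes the quadratic collapse to the affine function $\bigl((1-t)+At\bigr)\,\alpha_0\wedge d\alpha_0\wedge\sigma^{n-2}|_0$, and the orientation hypothesis gives precisely $A>0$, hence $w_t(0)\neq 0$ on all of $[0,1]$; the kernel hypothesis is then used a second time to show the right-hand side of the divided equation vanishes at $0$, so that $V_t|_0=0$ and the flow exists as a germ. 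None of this is in your proposal.

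Moreover, your stated reasons for where the hypotheses enter are incorrect, which is why the missing step is invisible in your outline. The kernel condition is not what allows a primitive with $\beta|_0=0$: the relative Poincar\'e lemma already yields $\beta=p_1\gamma$, vanishing identically along $\Sigma_2$, from the pullback hypothesis alone. The orientation condition is not about ``the sign of the division quotient'' or about obtaining a diffeomorphism rather than a homeomorphism (any smooth vector-field germ vanishing at $0$ integrates to diffeomorphism-germs); its role is to keep the affine function above positive, i.e.\ to prevent $\omega_t$ from degenerating at intermediate times --- exactly the failure mode illustrated by Example \ref{orientation}. Likewise, the solvability across $\Sigma_2$ is not achieved by showing ``the components of $\beta$ are divisible by $f$'' (that is automatic); one converts $V_t\rfloor\omega_t=-\beta$ into the equivalent equation $V_t\rfloor\omega_t^n=-n\beta\wedge\omega_t^{n-1}$, cancels the explicit factor $p_1$ on both sides, and then needs $\omega_t^n/(p_1\Omega)$ to be a unit --- again the same determinant computation. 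Note also that the paper proceeds differently in form: it first normalizes both germs to $d(p_1\pi^{\ast}\alpha_i)+\pi^{\ast}\sigma$ with a common $\sigma$ via a relative Darboux theorem (Proposition \ref{rel-Darboux} and Theorem \ref{4-dim}(a)) and only then runs Moser; your direct-path variant could be salvaged, but only by supplying the very computation above. Finally, the statement allows $\text{rank}(\iota^{\ast}\omega_i|_0)=2n-2$, a case outside Theorem \ref{4-dim} that must be handled separately by Martinet's normal form; your argument leaves it untreated as well.
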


Theorems \ref{inv-C} and \ref{inv-R} are corollaries of Theorem
\ref{4-dim}. Proofs of Theorems \ref{inv-C} and \ref{inv-R} are
presented in the next section.

\section{A normal form and a realization theorem for singular
symplectic forms
with structurally smooth Martinet hypersurfaces.}

The main result of this section is Theorem \ref{4-dim}. In this
theorem a 'normal' form of $\omega$ with the given pullback to the
Martinet hypersurface is presented and a sufficient conditions for
equivalence of germs of singular symplectic forms with the same
pullback to the common Martinet hypersurface are found. We also
show which germs of closed $2$-forms on $\mathbb K^{2n-1}$ may be
obtained as a pullback to a structurally smooth Martinet
hypersurface of a singular symplectic form-germ on $\mathbb
K^{2n}$.   All results of this section hold in $\mathbb
C$-analytic, $\mathbb R$-analytic and ($C^{\infty}$) smooth
categories.

Let $\Omega$ be  a volume form-germ on $\mathbb K^{2n}$. Let
$\omega_0$ and $\omega_1$ be two germs of singular symplectic
forms on $\mathbb K^{2n}$ with structurally smooth Martinet
hypersurfaces at $0$. It is obvious that if there exists a
diffeomorphism-germ of $\mathbb K^{2n}$ at $0$  such that
$\Phi^{\ast}\omega_1=\omega_0$ then
$\Phi(\Sigma_2(\omega_0))=\Sigma_2(\omega_1)$. Therefore we assume
that these singular symplectic forms have the same Martinet
hypersurface.

If the singular symplectic form-germs are equal on their common
Martinet hypersurface than we obtain the following result ( see
{\cite{Geometry} ).
\begin{prop}
\label{rel-Darboux}
 Let $\omega _{0}$ and $\omega _{1}$ be two germs at $0$ of
singular symplectic forms on $\mathbb K^{2n}$ with the common
structurally smooth Martinet hypersurface $\Sigma_2$.

If $\frac{\omega _{1}^{n}}{\omega _{0}^{n}}|_0>0$ and $\omega
_{0}|_{T_{\Sigma_2}{\mathbb K^{2n}}}=\omega
_{1}|_{T_{\Sigma_2}{\mathbb K^{2n}}}=\tilde{\omega}$ then there
exists a diffeomorphism-germ $\Phi :(\mathbb K^{2n},0)\rightarrow
(\mathbb K^{2n},0)$ such that
\[
\Phi ^{\ast }\omega _{1}=\omega _{0}
\]
and $\Phi|_{\Sigma_2}=Id_{\Sigma_2}$.
\end{prop}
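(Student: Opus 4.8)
The plan is to prove the statement by Moser's homotopy method, applied to the linear path $\omega_t=(1-t)\omega_0+t\omega_1$, $t\in[0,1]$, and to produce $\Phi$ as the time-one flow of a time-dependent vector field $X_t$ that vanishes on $\Sigma_2$. First I would fix a defining function $g$ of $\Sigma_2$, so that $\Sigma_2=\{g=0\}$ and $dg|_0\neq0$, and write $\omega_i^n=g\,u_i\,\Omega$ with $u_i(0)\neq0$; the hypothesis $\frac{\omega_1^n}{\omega_0^n}|_0>0$ then says precisely that $u_0(0)$ and $u_1(0)$ have the same sign. The hypothesis $\omega_0|_{T_{\Sigma_2}\mathbb K^{2n}}=\omega_1|_{T_{\Sigma_2}\mathbb K^{2n}}$ says that every coefficient of the closed $2$-form $\omega_1-\omega_0$ vanishes on $\Sigma_2$; since $dg|_0\neq0$, Hadamard's lemma gives $\omega_1-\omega_0=g\eta$ for a smooth ($\mathbb K$-analytic) $2$-form $\eta$, and $\omega_1-\omega_0$ is closed.

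I would first check that the whole path stays inside the class. Writing $\omega_t=\omega_0+tg\eta$ and expanding $\omega_t^n$ binomially, every term carrying at least one factor $g\eta$ is divisible by $g$, and $\omega_0^n$ is divisible by $g$; hence $\omega_t^n=g\,v_t\,\Omega$ for a smooth $v_t$. Evaluating at $0$, only the terms of order $\le1$ in $g$ survive, and one finds $v_t(0)=(1-t)u_0(0)+t\,u_1(0)$, a convex combination of two numbers of the same sign. Thus $v_t(0)\neq0$ for all $t\in[0,1]$, so near $0$ each $\omega_t$ is a singular symplectic form whose Martinet hypersurface is exactly $\Sigma_2$ and which is structurally smooth there. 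This is where positivity is used: it prevents $\omega_t^n$ from acquiring extra zeros along the path.

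Since each $\omega_t$ is closed, the Moser equation $\frac{d}{dt}(\Phi_t^\ast\omega_t)=0$ reduces to $d(X_t\rfloor\omega_t)=-(\omega_1-\omega_0)=-g\eta$. So it suffices to find a $1$-form $\beta_t$ with $d\beta_t=-g\eta$ and then to solve the pointwise linear equation $X_t\rfloor\omega_t=\beta_t$ for a smooth $X_t$ vanishing on $\Sigma_2$; its flow then fixes $\Sigma_2$ pointwise, and $\Phi=\Phi_1$ satisfies $\Phi^\ast\omega_1=\omega_0$ together with $\Phi|_{\Sigma_2}=\mathrm{Id}_{\Sigma_2}$. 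Off $\Sigma_2$ the form $\omega_t$ is nondegenerate and $X_t$ is uniquely determined; the whole difficulty is the smooth extension across $\Sigma_2$. In coordinates adapted to $\Sigma_2$ and to the common two-dimensional kernel $K$ of $\omega_t$ along $\Sigma_2$ (the kernels of all $\omega_t$ coincide there, since the forms agree on $T_{\Sigma_2}\mathbb K^{2n}$), one checks that $X_t\rfloor\omega_t=\beta_t$ has a smooth solution vanishing on $\Sigma_2$ provided the components of $\beta_t$ dual to $K$ vanish to second order on $\Sigma_2$, while the remaining components vanish to first order.

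The main obstacle is therefore to choose the primitive $\beta_t$ with this prescribed vanishing. Here I would exploit closedness: from $d(g\eta)=dg\wedge\eta+g\,d\eta=0$ we get $dg\wedge\eta|_{\Sigma_2}=0$, which forces $\eta|_{\Sigma_2}=dg\wedge\theta_t$ for a $1$-form $\theta_t$; extending $\theta_t$ to a neighbourhood, $\eta-dg\wedge\theta_t=g\rho_t$, and hence $g\eta=d(\tfrac{g^2}{2}\theta_t)+g^2\mu_t$ with $\mu_t$ smooth and $g^2\mu_t$ closed. It then remains to produce a primitive of the closed form $g^2\mu_t$ that is itself divisible by $g^2$; adding it to $-\tfrac{g^2}{2}\theta_t$ yields a primitive $\beta_t$ of $-g\eta$ all of whose coefficients are divisible by $g^2$, which is more than enough for the division above and makes $X_t$ vanish on $\Sigma_2$. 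This divisible-primitive statement is a relative Poincaré lemma adapted to $\Sigma_2$, conveniently proved with a homotopy operator built from a vector field $E$ with $E(g)=g$, which preserves divisibility by powers of $g$; I expect verifying it, together with the smoothness of the division by the degenerate $\omega_t$, to be the technical heart of the argument. Finally the $t$-dependence is smooth, so $X_t$ integrates to a flow on a common neighbourhood of $0$, and $\Phi=\Phi_1$ is the desired diffeomorphism.
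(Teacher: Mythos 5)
Your strategy coincides with the paper's: the linear Moser path (your computation $v_t(0)=(1-t)u_0(0)+t\,u_1(0)$ is exactly how the paper shows $\Sigma_2(\omega_t)=\Sigma_2$ for all $t$, there carried out in Martinet's normalization $\omega_0^n=p_1\Omega$, $\omega_1^n=p_1(A+g)\Omega$), followed by the key step of producing a primitive of $\omega_1-\omega_0$ divisible by the square of a defining function --- this is precisely the paper's lemma that a closed $2$-form-germ of the form $p_1\alpha$ equals $d(p_1^2\beta)$, and your homotopy-operator proof of it is a valid alternative to the paper's derivation from the relative Poincar\'e lemma. The gap is in the last step, which you yourself defer as ``the technical heart'': solving $X_t\rfloor\omega_t=\beta_t$ smoothly with $X_t|_{\Sigma_2}=0$.

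Your plan for that step --- componentwise division ``in coordinates adapted to $\Sigma_2$ and to the common two-dimensional kernel $K$'' --- would fail in exactly the situations this proposition is used for. The position of $K_p=\ker(\omega_t|_p)$ relative to $T_p\Sigma_2$ is not constant: one checks that $K_p\subset T_p\Sigma_2$ if and only if $\text{rank}(\iota^{\ast}\omega_t|_p)\le 2n-4$, so under the hypotheses of Theorems \ref{inv-C}, \ref{inv-R} and \ref{4-dim} the intersection $K_p\cap T_p\Sigma_2$ is $2$-dimensional at $p=0$ but $1$-dimensional at nearby generic points of $\Sigma_2$; since a pair of coordinate vector fields spans a plane whose intersection with $\{x_1=0\}$ has constant dimension, no coordinate system can be adapted to both $\Sigma_2$ and $K$ near $0$. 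Moreover, even reformulated with a frame $T=K\oplus W$, your vanishing-order criterion is not pointwise linear algebra: all blocks of $v\mapsto v\rfloor\omega_t$ involving $K$ vanish on $\Sigma_2$, and solving after division by $g$ requires inverting the quotient block, i.e.\ nondegeneracy of the $g$-derivative of $\omega_t|_{K\times K}$ along $\Sigma_2$ --- this is exactly where structural smoothness must enter, and your sketch neither identifies nor proves it. The paper avoids the whole issue with one trick you are missing: since $\Sigma_2$ is nowhere dense and $\gamma\mapsto\gamma\wedge\omega_t^{n-1}$ is injective on $1$-forms wherever $\omega_t$ is nondegenerate, the equation $X_t\rfloor\omega_t=g^2\beta_t'$ is equivalent to $X_t\rfloor\omega_t^n=n\,g^2\beta_t'\wedge\omega_t^{n-1}$; writing $\omega_t^n=g\,v_t\,\Omega$ with $v_t(0)\ne0$ and cancelling one factor of $g$, this becomes $X_t\rfloor(v_t\Omega)=n\,g\,\beta_t'\wedge\omega_t^{n-1}$, an equation against a volume form. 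Its unique solution is smooth, is divisible by $g$ (so $X_t|_{\Sigma_2}=0$ and the flow fixes $\Sigma_2$ pointwise), and satisfies the original equation off $\Sigma_2$, hence everywhere by continuity. With that substitution your argument becomes a complete proof.
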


\begin{rem}
The assumption $\frac{\omega _{1}^{n}}{\omega _{0}^{n}}|_0>0$ is
needed only in $\mathbb R$-analytic and smooth categories. In the
$\mathbb C$-analytic category we may assume that $\Re
e\left(\frac{\omega _{1}^{n}}{\omega _{0}^{n}}|_0\right)>0$ or
$\Im m\left(\frac{\omega _{1}^{n}}{\omega
_{0}^{n}}|_0\right)\ne0$. But this is a technical assumption (see
Remark \ref{C-orient}).
\end{rem}

\begin{proof} We present the proof in $\mathbb R$-analytic and smooth categories.
The proof in the $\mathbb C$-analytic category is similar. Firstly
we simplify the form-germs $\omega _{0}$ and $\omega _{1}$. We find the
local coordinate system  such that $\omega _{0}^{n}=p_1 \Omega $, $
\omega _{1}^{n}=p_1 (A+g)\Omega $, where $\Omega=dp_1\wedge dq_1
\wedge \cdots\wedge dp_n \wedge dq_n$, $g$ is a function-germ,
$g(0)=0$ and $A>0$ (see \cite{Martinet}). By assumptions, we have
$\omega _{i}=p_{1}\alpha _{i}+\tilde{\omega}$, where $\alpha_i$
and $\tilde{\omega}$ are germs of $2$-forms and
$\tilde{\omega}|_{T_{\left\{ p_{1}=0\right\} }\mathbb
R^{2n}}=\omega _{i}|_{T_{\left\{ p_{1}=0\right\} }\mathbb R^{2n}}$
for $i=0,1$. Then further on we use the Moser homotopy method (see
\cite{Moser}). Let $\omega _{t}=t\omega _{1}+(1-t)\omega _{0}$,
for $t\in \lbrack 0;1]$.

We want to find a family of diffeomorphisms $\Phi_t$, $t\in [0;1]$
such that $\Phi_t^{\ast}\omega_t=\omega_0,$ for $t \in [0;1]$, \
$\Phi_0=Id.$ Differentiating the above homotopy equation by $t$,
we obtain
\[
d(V_t\rfloor\omega_t)=\omega_0-\omega_1=p_1(\alpha_0-\alpha_1),
\]
where $V_t=\frac{d}{dt}{\Phi_t}$. Now we prove the following
lemma.

\begin{lem}
\label{Poincare} If $p_{1}\alpha $ is  a closed $2$-form-germ
on $\mathbb R^{2n}$ then there exists  a $1$-form-germ $\beta
$ such that $p_{1}\alpha =d(p_{1}^{2}\beta )$. \
\end{lem}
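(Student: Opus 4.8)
The plan is to prove this relative Poincar\'e-type statement by means of the homotopy operator attached to the fibrewise contraction toward the hyperplane $\{p_1=0\}$, rather than toward the origin. Concretely, I would introduce the map $H\colon [0,1]\times(\mathbb R^{2n},0)\to(\mathbb R^{2n},0)$, $H(s,p_1,p')=(sp_1,p')$, where $p'$ denotes the remaining $2n-1$ coordinates; on a sufficiently small neighbourhood of $0$ each $h_s:=H(s,\cdot)$ maps it into itself for all $s\in[0,1]$, so everything is well defined on germs. The standard homotopy identity then reads $h_1^\ast\eta-h_0^\ast\eta=d(K\eta)+K(d\eta)$ for any form-germ $\eta$, where $K\eta=\int_0^1 \big(\partial_s\rfloor(H^\ast\eta)\big)\,ds$ is the operator extracting and integrating the $ds$-component of $H^\ast\eta$.

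I would apply this to $\eta=p_1\alpha$. Since $\eta$ is closed by hypothesis, the term $K(d\eta)$ vanishes. Since $h_0$ sends every point into $\{p_1=0\}$ and $\eta$ carries the coefficient $p_1$, we get $h_0^\ast\eta=(h_0^\ast p_1)\,h_0^\ast\alpha=0$. As $h_1=\mathrm{id}$, the identity collapses to $p_1\alpha=d(K\eta)$, so it remains only to identify $K\eta$.

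The heart of the argument is to show that $K\eta$ is divisible by $p_1^2$. I would decompose $\alpha=dp_1\wedge\lambda+\mu$, where $\lambda=\sum_k a_k\,dy_k$ and $\mu$ involve only the differentials $dy_k$ of the coordinates other than $p_1$, and pull back. Because $H^\ast(dp_1)=d(sp_1)=s\,dp_1+p_1\,ds$ while $H^\ast$ fixes the remaining differentials, the only contribution to the $ds$-component of $H^\ast(p_1\alpha)$ comes from the $p_1\,ds$ piece of $H^\ast(dp_1)$; this supplies one factor $p_1$, which together with the coefficient $p_1$ of $\eta$ yields $p_1^{2}$. Integrating the resulting $s$-dependence over $[0,1]$ gives
\[
K\eta=p_1^{2}\,\beta,\qquad \beta=\sum_k\Big(\int_0^1 s\,a_k(sp_1,p')\,ds\Big)\,dy_k .
\]
Since integration over the compact interval $[0,1]$ preserves smoothness (and analyticity), $\beta$ is a genuine $1$-form-germ, and hence $p_1\alpha=d(p_1^2\beta)$, as required.

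The step I expect to require the most care is exactly this divisibility by $p_1^{2}$: a naive radial contraction toward the origin would produce only a single power of $p_1$. The gain of the second power is precisely why one contracts onto the hypersurface $\{p_1=0\}$ and not onto $0$, because then one factor of $p_1$ comes from the coefficient of $\eta$ while a second, independent factor is generated by the differential $H^\ast(dp_1)=s\,dp_1+p_1\,ds$. Everything else reduces to the routine verification of the homotopy formula and the regularity of the parameter integral defining $\beta$.
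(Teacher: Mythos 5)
Your proof is correct, but it follows a genuinely different route from the paper's. The paper does not carry out any homotopy computation itself: it invokes the Relative Poincar\'e Lemma (cited from Arnold--Givental and Domitrz--Janeczko--Zhitomirskii) as a black box to write $p_1\alpha=d(p_1\gamma)=dp_1\wedge\gamma+p_1\,d\gamma$, observes that restriction to $T_{\{p_1=0\}}\mathbb R^{2n}$ forces $dp_1\wedge\gamma$ to vanish there, hence $\gamma=p_1\delta+f\,dp_1$, and then upgrades the single power of $p_1$ to $p_1^2$ by the purely algebraic correction $\beta=\delta-\frac{df}{2}$, using $f p_1\,dp_1=d\bigl(\tfrac{p_1^2 f}{2}\bigr)-\tfrac{p_1^2}{2}\,df$. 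You instead inline what the cited lemma hides: the fibrewise contraction $H(s,p_1,p')=(sp_1,p')$ and its homotopy operator produce an explicit primitive, and the divisibility by $p_1^2$ drops out of the computation itself, since the $ds$-component of $H^{\ast}(p_1\alpha)$ carries one factor of $p_1$ from the coefficient and a second from $H^{\ast}(dp_1)=s\,dp_1+p_1\,ds$ --- exactly the point you flag as the crux, and your verification of it is sound (the terms not involving $dp_1$ contribute nothing to the $ds$-component). What each approach buys: yours is self-contained, yields a closed formula $\beta=\sum_k\bigl(\int_0^1 s\,a_k(sp_1,p')\,ds\bigr)dy_k$, and works verbatim in the smooth, $\mathbb R$-analytic and $\mathbb C$-analytic categories, since the parameter integral preserves regularity; the paper's is shorter given the cited lemma and isolates a reusable algebraic trick for converting a primitive vanishing on $\{p_1=0\}$ into one divisible by $p_1^2$. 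Your two points of care --- that a ball is preserved by every $h_s$, so the argument makes sense on germs, and that the integral over $[0,1]$ preserves smoothness/analyticity --- are indeed the only delicate points, and you handle both.
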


\begin{proof}[Proof of Lemma \ref{Poincare}.] By the Relative Poincare Lemma (see
\cite{ArGi}, \cite{DJZ1}) there
exists  a $1$-form-germ $\gamma $ such that $p_{1}\alpha
=d(p_{1}\gamma )=dp_{1}\wedge \gamma +p_{1}d\gamma $. Therefore $dp_{1}\wedge \gamma |_{T_{\left\{
p_{1}=0\right\} }\mathbb R^{2n}}=0$. Hence there exist a
$1$-form-germ $\delta $ and a smooth function-germ $f$
such that $\gamma =p_{1}\delta +fdp_{1}$. If we take $\beta =\delta -\frac{df}{2}$ then
\[
p_{1}\alpha =d(p_{1}\gamma -d(\frac{p_{1}^{2}f}{2}))=d(p_{1}^{2}\beta ),
\]
which finishes the proof of Lemma \ref{Poincare}.
\end{proof}

The $2$-form  $p_{1}(\alpha _{0}-\alpha _{1})=\omega _{1}-\omega _{0}$
is closed. By the above lemma we have
\begin{equation}
V_{t}\rfloor \omega _{t}=p_{1}^{2}\beta .  \label{linear}
\end{equation}

Now we calculate $\Sigma_2(\omega_t)$. It is easy to see that
\[
\omega _{i}^{n}=(p_{1}\alpha _{i}+\tilde{\omega})^{n}=\tilde{\omega}%
^{n}+p_{1}\sum_{k=1}^{n}\left( _{k}^{n}\right) p_{1}^{k-1}\alpha
_{i}^{k}\wedge \tilde{\omega}^{n-k}.
\]
But $\omega _{i}^{n}|_{T_{\left\{ p_{1}=0\right\} }R^{2n}}=0$.
This clearly forces $\tilde{\omega}^{n}=0$. By the above formula
we get
\[
n\alpha _{0}\wedge \tilde{\omega}^{n-1}=\Omega
-p_{1}\sum_{k=2}^{n}\left( _{k}^{n}\right) p_{1}^{k-2}\alpha
_{0}^{k}\wedge \tilde{\omega}^{n-k}
\]
and
\[
n\alpha _{1}\wedge \tilde{\omega}^{n-1}=(A+g)\Omega
-p_{1}\sum_{k=2}^{n}\left( _{k}^{n}\right) p_{1}^{k-2}\alpha
_{1}^{k}\wedge \tilde{\omega}^{n-k}
\]

The above formulas imply the following formula
\begin{eqnarray}
\omega _{t}^{n} &=&(p_{1}(t\alpha _{1}+(1-t)\alpha
_{0})+\tilde{\omega})^{n}=
 \nonumber \\
&=&p_{1}(tn\alpha _{1}\wedge \tilde{\omega}^{n-1}+(1-t)n\alpha
_{0}\wedge
\tilde{\omega}^{n-1})+  \nonumber \\
&&+\sum_{k=2}^{n}\left( _{k}^{n}\right) p_{1}^{k}(t\alpha
_{1}+(1-t)\alpha
_{0})^{k}\wedge \tilde{\omega}^{n-k}  \nonumber \\
&=&p_{1}(1+t(A+g-1))\Omega + \label{det}   \\
&&+p_{1}^{2}\sum_{k=2}^{n}\left( _{k}^{n}\right) p_{1}^{k-2}\left(
(t\alpha _{1}+(1-t)\alpha _{0})^{k}-t\alpha _{1}^{k}-(1-t)\alpha
_{0}^{k}\right) \wedge \tilde{\omega}^{n-k} .\nonumber
\end{eqnarray}
From (\ref{det}) we obtain
\begin{equation}
\omega_t^n=p_{1}(1+t(A+g-1)+p_{1}h_{t})\Omega,\label{n-linear}
\end{equation}
where $h_{t}$ is a function-germ. But
$(1+t(A-1))>0 $ for $A>0$ and $t\in [0,1]$.

$\Sigma_2(\omega_t)=\left\{p_1=0\right\}$ is nowhere dense,
therefore by direct algebraic calculation, it is easy to see that
equation (\ref{linear}) is equivalent
 to the following equation
\begin{equation}\label{2l}
V_{t}\rfloor \omega _{t}^n=n p_{1}^{2}\beta \wedge \omega_t^{n-1}.
\end{equation}

Combining (\ref{2l}) with (\ref{n-linear}) we obtain
\begin{equation}
V_{t}\rfloor (1+t(A+g-1)+p_{1}h_{t})\Omega=n p_{1}\beta \wedge
\omega_t^{n-1}. \label{non-linear}
\end{equation}
But if $A>0$ then $(1+t(A-1))>0 $ for $t\in[0;1]$.
 Therefore we can find a smooth (or $\mathbb R$-analytic) vector field-germ $V_{t}$ that
satisfies (\ref{non-linear}). The restriction of $V_{t}$ to $\Sigma_2$ vanishes, because the
right hand side of (\ref{non-linear}) vanishes on $\Sigma_2$.
Hence there exists a diffeomorphism $\Phi _{t}$ such that $\Phi
_{t}^{\ast }\omega _{t}=\omega _{0}$ for $t\in \lbrack 0,1]$ and
$\Phi_t|_{\Sigma_2}=Id_{\Sigma_2}$. This completes the proof of
Theorem \ref{rel-Darboux}.
\end{proof}

If $\text{rank} (\iota^{\ast}\omega|_0)$ is $2n-2$ then $\omega$ is
equivalent to $\Sigma_{20}$ Martinet's singular form (see
\cite{Martinet}). Therefore we study singular symplectic forms
such that $\text{rank}( \iota^{\ast}\omega|_0)\le 2n-4$. In fact we will
prove that structural stability of $\Sigma_2(\omega)$ implies
that $\text{rank} (\iota^{\ast}\omega|_0)=2n-4$

In the next theorem we describe all germs of singular symplectic
forms $\omega$ on $\mathbb K^{2n}$ with structurally smooth
Martinet hypersurfaces at $0$ and $\text{rank} (\iota^{\ast}\omega|_0)\le
2n-4$. We also find the sufficient conditions for equivalence of
singular symplectic forms of this type. This is a generalisation of the analogous result for singular symplectic forms on $4$-dimensional manifolds (\cite{4dim}).

 We use the following  mappings in the subsequent results $\iota :\Sigma_2=\left\{ p_{1}=0\right\}\hookrightarrow \mathbb K^{2n}$
\[
\iota (p_{2},\cdots,p_{n},q_{1},\cdots,q_{n})=
(0,p_{2},\cdots,p_{n},q_{1},\cdots,q_{n})
\]
and $\pi :\mathbb K^{2n}\rightarrow \Sigma_2=\left\{
p_{1}=0\right\}$
\[
\pi(p_{1},p_{2},\cdots,p_{n},q_{1},\cdots,q_{n})=
(p_{2},\cdots,p_{n},q_{1},\cdots,q_{n})  .
\]

\begin{thm}
\label{4-dim} Let $\omega$ be  a singular symplectic form-germ
on $\mathbb K^{2n}$ with a structurally smooth Martinet
hypersurface at $0$.

(a) If $\text{rank}(\iota^{\ast}\omega|_{0}) \le 2n-4$ then there exists a diffeomorphism-germ $\Phi :(\mathbb K^{2n},0)\rightarrow
(\mathbb K^{2n},0)$ such that
\[
\Phi ^{\ast }\omega =d\left( p_{1}\pi ^{\ast }\alpha \right) +\pi
^{\ast }\sigma ,
\]

where $\sigma =\iota^{\ast}\Phi^{\ast}\omega$ is  a
closed $2$-form-germ on $\left\{ p_{1}=0\right\} $  and $\alpha $ is  a $1$-form-germ on $\left\{ p_{1}=0\right\}$ such that
$\alpha \wedge \sigma^{n-1} =0$ and $\alpha\wedge d\alpha \wedge
\sigma^{n-2}|_0\ne 0$.

(b) Moreover if $\omega_0=d\left( p_{1}\pi ^{\ast }\alpha_0 \right)
+\pi ^{\ast }\sigma$ and $\omega_1=d\left( p_{1}\pi ^{\ast
}\alpha_1 \right) +\pi ^{\ast }\sigma$ are two germs of singular
symplectic forms satisfying the above conditions  and
\begin{enumerate}
\item  \label{orientation1} $\frac{\alpha_1 \wedge d\alpha_1\wedge
\sigma^{n-2} }{ \alpha_0 \wedge d\alpha_0\wedge
\sigma^{n-2}}|_{0}>0$, (only for $\mathbb K=\mathbb R$)

\item \label{lineincot} $\alpha_1|_0 \wedge \alpha_0|_0 \wedge
\sigma^{n-2}|_0=0$,
\end{enumerate}
then there exists a diffeomorphism-germ $\Psi :(\mathbb
K^{2n},0)\rightarrow (\mathbb K^{2n},0)$  such that
\[
\Psi ^{\ast }\omega_1 =\omega_0.
\]
\end{thm}

\begin{rem}\label{C-orient}
Assumption (\ref{orientation1}) is only needed in $\mathbb
R$-analytic and smooth categories. In the $\mathbb C$-analytic
category we have
$$
\Phi^{\ast}(d\left( p_{1}\pi ^{\ast }\alpha \right) +\pi ^{\ast
}\sigma)=d\left( p_{1}\pi ^{\ast }i\alpha \right) +\pi ^{\ast
}\sigma,
$$
 where $\Phi$
is the following diffeomorphism
$$
\Phi(p_1,p_{2},\cdots,p_{n},q_{1},\cdots,q_{n}))=(ip_1,p_{2},\cdots,p_{n},q_{1},\cdots,q_{n})
$$
and $i^2=-1$. It is obvious that $\Phi|_{\Sigma_2}=Id_{\Sigma_2}$,
where $\Sigma_2=\{p_1=0\}$ and $i\alpha\wedge d(i\alpha)\wedge
\sigma^{n-2}=-\alpha\wedge d\alpha \wedge \sigma^{n-2}$.
\end{rem}

\begin{proof} The proof is similar to the proof of analogous theorem for singular symplectic forms on a $4$-dimensional manifold (see \cite{4dim}).  We can find a coordinate system  $(p_1, q_1,\cdots, p_n, q_n)$ such that $\Sigma_2(\omega)=\{p_1=0\}$. Then by the Relative Poincare Lemma (see \cite{ArGi}, \cite{DJZ1}) there exists $1$-form-germ $\gamma$ on $\mathbb K^{2n}$ such that
$
\omega=d(p_1\gamma)+\pi^{\ast}\sigma$. It is clear that we can write $\gamma $ in the following form
$\gamma=\pi^{\ast}\alpha+p_1\delta+g dp_1$, where $\alpha$ is a $1$-form-germ on $\left\{ p_1=0 \right\}$, $g$ is a
function-germ
 and $\delta$
is a $1$-form-germ on $\mathbb K^{2n}$. Then
\[
d(p_1(p_1\delta+g dp_1))=p_1(2 dp_1 \wedge \delta+p_1 d\delta +dg \wedge
dp_1).
\]
By Lemma \ref{Poincare} we have
$\omega=d(p_1\pi^{\ast}\alpha)+\pi^{\ast}\sigma+d(p_1^2 \theta)$.

Hence
\begin{eqnarray*}
\omega ^{n}&=&n dp_1 \wedge \pi^{\star}\alpha \wedge
\pi^{\star}(\sigma^{n-1})+2np_{1}dp_{1}\wedge \pi ^{\star }\beta
\wedge \pi ^{\star
}(\sigma ^{n-1})\\
& &+n(n-1)p_{1}dp_{1}\wedge \pi ^{\star }\alpha \wedge d\pi
^{\star }\alpha \wedge \pi ^{\star }(\sigma
^{n-2}))+p_{1}^{2}v\Omega ,
\end{eqnarray*}
where $v$ is a function-germ  at $0$ on $\mathbb K^{2n}$.  We have $\alpha \wedge
\sigma^{n-1}=0$, because $\omega ^{n}|_{T_{\left\{ p_{1}=0\right\}
}\mathbb K^{2n}}=0$. From $\sigma ^{n-1}|_{0}=0$, we have
\[
\omega ^{n}=n(n-1)p_{1}dp_{1}\wedge \pi ^{\star }\alpha \wedge
d\pi ^{\star }\alpha \wedge \pi ^{\star }(\sigma
^{n-2})+p_{1}g\Omega.
\]
where $g$ is a function-germ on $\mathbb K^{2n}$ vanishing at $0$.  From the above we
obtain that
$$
\alpha \wedge d\alpha \wedge \sigma ^{n-2}|_0\ne 0.
$$
Therefore
\begin{equation} \label{rank}
\text{rank}(\sigma|_0)=2n-4.
\end{equation}

Let
$$
\omega _{0}=d\left( p_{1}\pi ^{\ast }\alpha \right) +\pi ^{\ast
}\sigma.
$$
Then
\[
\omega _{0}^{n}=n(n-1)p_{1}dp_{1}\wedge \pi ^{\star }\alpha \wedge
d\pi ^{\star }\alpha \wedge \pi ^{\star }(\sigma
^{n-2})+p_{1}h\Omega ,
\]
where $h$ is a function-germ on $\mathbb K^{2n}$ vanishing at $0$.
One can check that
\[
\tilde{\omega}=\omega _{0}|_{T_{\left\{ p_{1}=0\right\} }\mathbb
K^{2n}}=dp_{1}\wedge \pi ^{\star }\alpha +\pi ^{\star }\sigma
=\omega |_{T_{\left\{ p_{1}=0\right\} }\mathbb K^{2n}}.
\]
Therefore by Proposition \ref{rel-Darboux} there exists a germ of
a diffeomorphism $\Theta :(\mathbb K^{2n},0)\rightarrow (\mathbb
K^{2n},0)$ such that $\Theta ^{\ast }\omega =\omega _{0}$ and
$\Theta|_{\left\{p_1=0\right\}}= Id_{\left\{p_1=0\right\}}$.

This finish the proof of part (a)

Now we prove part (b). (\ref{rank}) and (\ref{lineincot}) implies
that there exists $B\ne 0$ such that
$\alpha_1|_0\wedge\sigma^{n-2}|_0=B
\alpha_0|_0\wedge\sigma^{n-2}|_0$. If $B\ne 1$ then
$\Phi^{\ast}\omega_1=d(p_1\pi^{\ast}(B\alpha))+\pi^{\ast}\sigma$
where  $\Phi$ is a diffeomorphism-germ of the form $\Phi(p,q)=(B
p_1,p_2,...,p_n,q_1,...,q_n)$). Thus we may assume that $B=1$.

We use the Moser homotopy method. Let
$\alpha_t=t\alpha_1+(1-t)\alpha_0$ and $\omega_t=d\left( p_{1}\pi
^{\ast }\alpha_t \right) +\pi ^{\ast }\sigma$ for $t\in [0,1]$. It
is easy to check that $\alpha_t \wedge \sigma^{n-1}=0$.

Now we look for germs of diffeomorphisms $\Phi_t$ such that
\begin{equation}
\label{hom} \Phi_t^{\ast}\omega_t=\omega_0,\  \text{for} \ t \in [0;1], \
\Phi_0=Id.
\end{equation}
 Differentiating the above homotopy equation
by $t$, we obtain
\[
d(V_t\rfloor\omega_t)=d(p_1\pi^{\ast}(\alpha_0-\alpha_1)),
\]
where $V_t=\frac{d}{dt}{\Phi_t}$. Therefore we have to solve the
following equation
\begin{equation}\label{rl}
V_t\rfloor\omega_t=p_1\pi^{\ast}(\alpha_0-\alpha_1).
\end{equation}

We calculate the Martinet hypersurface of $\omega_t$.
$$
\omega_t^n=n(n-1)p_1dp_1 \wedge \pi^{\star}(\alpha_t \wedge
d\alpha_t \wedge \sigma ^{n-2})+p_{1}^2g_t\Omega,
$$
 where $g_t$
is a smooth function-germ at $0$, because $\sigma^{n}=0$,
$(d\alpha_t)\wedge\sigma^{n-1}=0$ and
$\alpha_t\wedge\sigma^{n-1}=0$.

Now we calculate
\begin{eqnarray*}
&\alpha_t \wedge d\alpha_t \wedge \sigma ^{n-2}|_{0}=& \nonumber \\
&=t^2\alpha_1 \wedge d\alpha_1 \wedge \sigma
^{n-2}|_{0}+t(1-t)\alpha_1 \wedge d\alpha_0 \wedge
\sigma ^{n-2}|_{0}+& \nonumber \\
&+t(1-t)\alpha_0 \wedge d\alpha_1 \wedge \sigma ^{n-2}|_{0}+
(1-t)^2\alpha_0 \wedge d\alpha_0 \wedge \sigma ^{n-2}|_{0}&. \nonumber \\
\end{eqnarray*}
From $\alpha_0 \wedge \sigma ^{n-2}|_0= \alpha_1 \wedge \sigma
^{n-2}|_0$ we have
\begin{eqnarray*}
&\alpha_t \wedge d\alpha_t \wedge \sigma ^{n-2}|_{0}=& \nonumber \\
&=(t^2+t(1-t))d\alpha_1 \wedge \alpha_1 \wedge \sigma ^{n-2}|_{0}& \nonumber \\
&+(t(1-t)+(1-t)^2) d\alpha_0 \wedge \alpha_0 \wedge \sigma ^{n-2}|_{0}=& \nonumber \\
&=t \alpha_1 \wedge d\alpha_1 \wedge \sigma ^{n-2}|_{0}+
(1-t) \alpha_0 \wedge d\alpha_0 \wedge \sigma ^{n-2}|_{0}&. \nonumber \\
\end{eqnarray*}
But there exists $A>0$ such that $\alpha_1 \wedge d\alpha_1 \wedge
\sigma ^{n-2}|_{0}= A \alpha_0 \wedge d\alpha_0 \wedge \sigma
^{n-2}|_{0}$, so we obtain
\begin{eqnarray*}
&\alpha_t \wedge d\alpha_t \wedge \sigma ^{n-2}|_{0}=& \nonumber \\
&=(At+(1-t))\alpha_0 \wedge d\alpha_0 \wedge \sigma ^{n-2}|_{0}\ne 0\nonumber \\
\end{eqnarray*}
for $t\in [0,1]$. Therefore
\begin{equation*}
dp_1\wedge\pi^{\ast}(\alpha_t\wedge d\alpha_t\wedge \sigma
^{n-2})|_0 \ne 0
\end{equation*}
for $t\in[0;1]$. Thus $\Sigma_2(\omega_t)=\{p_1=0\}$.

 Because $\Sigma_2$ is nowhere dense, equation (\ref{rl}) is equivalent to
\[
V_t\rfloor\omega_t^n=np_1\pi^{\star}(\alpha_0-\alpha_1)\wedge\omega_t^{n-1}
\]
and $\omega_t^n=n(n-1)p_1dp_1 \wedge \pi^{\star}(\alpha_t \wedge
d\alpha_t \wedge \sigma ^{n-2})+p_{1}^2g_t\Omega$ , where $g_t$ is
a smooth function-germ at $0$. Hence we have to solve the
following equation
\begin{equation}
\label{n2-linear} V_t\rfloor \left(n(n-1)dp_1 \wedge
\pi^{\star}(\alpha_t \wedge d\alpha_t \wedge \sigma
^{n-2})+p_1g_t\Omega
\right)=n\pi^{\star}(\alpha_0-\alpha_1)\wedge\omega_t^{n-1}.
\end{equation}
 From the above calculation we have $\alpha_t \wedge d\alpha_t
\wedge \sigma ^{n-2}|_{0}\ne 0$. Therefore $n(n-1)dp_1 \wedge
\pi^{\star}(\alpha_t \wedge d\alpha_t \wedge \sigma ^{n-2})+
p_1g_t\Omega$ is a nondegenerate $2n$-form-germ  on $\mathbb
K^{2n}$ and
\begin{eqnarray*}
&n\pi^{\star}(\alpha_0-\alpha_1)\wedge\omega_t^{n-1}|_0=&\nonumber \\
&n(n-1)dp_1\wedge\pi^{\star}(\alpha_1\wedge\alpha_0\wedge\sigma^{n-2})|_0=0,& \nonumber \\
\end{eqnarray*}
because $\alpha_1 \wedge \alpha_0 \wedge \sigma ^{n-2}|_0=0$. Hence
we can find a smooth solution $V_t$ of (\ref{n2-linear}) such that
$V_t|_0=0$. Thus there exit germs of diffeomorphisms $\Phi_t$,
which satisfy (\ref{hom}). For $t=1$ we have
$\Phi_1^{\star}\omega_1=\omega_0$.
\end{proof}

Now we can prove main theorems from the previous section.
\begin{proof}[Proof of Theorems \ref{inv-C} and \ref{inv-R}]
It is easy to see that if
$\omega=d(p_1\pi^{\ast}\alpha)+\pi\sigma$, where $\alpha$ and
$\sigma$ satisfy conditions of Theorem \ref{4-dim}  then
$\ker \omega^{n-1}|_0=\ker(\alpha\wedge\sigma^{n-2})|_0$ and the canonical orientation
of $\Sigma_2$ is defined by the volume form $\alpha\wedge d\alpha
\wedge \sigma^{n-2}$. By Theorem \ref{4-dim} we get the result.
\end{proof}

We call a closed $2$-form-germ $\sigma$ on $\mathbb K^{2n-1}$
{\it realizable with a structurally smooth Martinet hypersurface}
if there exists  a singular symplectic form-germ $\omega$ on
$\mathbb K^{2n}$ such that $\Sigma_2(\omega)= \{0\}\times \mathbb
K^{2n-1}$ is structurally smooth and
$\omega|_{T\Sigma_2(\omega)}=\sigma$.

From Martinet's normal form of  a singular symplectic
form-germ on $\mathbb K^{2n}$ of the rank $2n-2$ we know that all germs
of closed $2$-forms on $\mathbb K^{2n-1}$ of the rank $2n-2$ are
realizable with a structurally smooth Martinet hypersurface.  From
part (a) of the Theorem \ref{4-dim} we obtain the following
realization theorem of closed $2$-forms on $\mathbb K^{2n-1}$ of
the rank less than $2n-2$ at $0\in \mathbb K^{2n-1}$.

\begin{thm}
Let $\sigma$ be a closed $2$-form-germ on $\mathbb K^{2n-1}$
and $\text{rank}( \sigma|_0)<2n-2$.  $\sigma$ is realizable with a
structurally smooth Martinet hypersurface if and only if $\text{rank}
\sigma|_0=2n-4$ and there exists a $1$ form-germ $\alpha$ on $\mathbb
K^{2n-1}$ such that $\alpha\wedge \sigma^{n-1}=0$ and
$\alpha\wedge d\alpha \wedge \sigma^{n-2}|_0\ne 0$.

\end{thm}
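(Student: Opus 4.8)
The plan is to prove the equivalence by establishing the two implications separately, using Theorem \ref{4-dim}(a) as the main engine for the ``only if'' direction and an explicit construction for the ``if'' direction.

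For the ``if'' direction I would take the candidate normal form suggested by Theorem \ref{4-dim}(a). Given $\sigma$ of rank $2n-4$ at $0$ and a $1$-form-germ $\alpha$ on $\mathbb K^{2n-1}$ with $\alpha\wedge\sigma^{n-1}=0$ and $\alpha\wedge d\alpha\wedge\sigma^{n-2}|_0\neq 0$, I would set
\[
\omega = d(p_1\pi^{\ast}\alpha) + \pi^{\ast}\sigma
\]
on $\mathbb K^{2n}=\mathbb K\times\mathbb K^{2n-1}$, with $p_1$ the coordinate on the first factor and $\pi,\iota$ as above. Closedness is immediate from $d\sigma=0$. The heart of the matter is to expand $\omega^n$ and read off that $\{p_1=0\}$ is a structurally smooth Martinet hypersurface. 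Since the only $p_1$-dependence sits in the explicit factor of $d(p_1\pi^{\ast}\alpha)=dp_1\wedge\pi^{\ast}\alpha+p_1\pi^{\ast}d\alpha$, the multinomial expansion, together with $(dp_1\wedge\pi^{\ast}\alpha)^2=0$, the vanishing $\pi^{\ast}\sigma^n=0$ (the base is odd-dimensional), and the dimension-forced $d\alpha\wedge\sigma^{n-1}=0$ on $\mathbb K^{2n-1}$, shows that the term of order $p_1^0$ vanishes precisely by the hypothesis $\alpha\wedge\sigma^{n-1}=0$, while the term of order $p_1^1$ collapses to $n(n-1)\,p_1\,dp_1\wedge\pi^{\ast}(\alpha\wedge d\alpha\wedge\sigma^{n-2})$. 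Hence $\omega^n=p_1 h\,\Omega$ with $h(0)\neq 0$ exactly because $\alpha\wedge d\alpha\wedge\sigma^{n-2}|_0\neq 0$, so $\Sigma_2(\omega)=\{p_1=0\}$ is structurally smooth and nowhere dense. Finally $\iota^{\ast}\omega=\sigma$ follows from $\pi\circ\iota=\mathrm{id}$ together with $\iota^{\ast}dp_1=0$, which yields realizability.

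For the ``only if'' direction I would start from a realizing $\omega$ whose Martinet hypersurface is the structurally smooth $\{0\}\times\mathbb K^{2n-1}=\{p_1=0\}$ with $\iota^{\ast}\omega=\sigma$. As $\mathrm{rank}(\iota^{\ast}\omega|_0)=\mathrm{rank}(\sigma|_0)$ is even and strictly below $2n-2$, it is $\le 2n-4$, so Theorem \ref{4-dim}(a) applies: there is a diffeomorphism-germ $\Phi$ with $\Phi^{\ast}\omega=d(p_1\pi^{\ast}\alpha)+\pi^{\ast}\tilde\sigma$, where $\tilde\sigma=\iota^{\ast}\Phi^{\ast}\omega$ has rank $2n-4$ at $0$ by equation (\ref{rank}) and $\alpha$ satisfies both $\alpha\wedge\tilde\sigma^{n-1}=0$ and $\alpha\wedge d\alpha\wedge\tilde\sigma^{n-2}|_0\neq 0$. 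Since $\Phi$ preserves $\{p_1=0\}$ setwise, one has $\tilde\sigma=\phi^{\ast}\sigma$ for the induced diffeomorphism-germ $\phi=\Phi|_{\{p_1=0\}}$ of $\mathbb K^{2n-1}$ fixing $0$; the rank at $0$ and the existence of a $1$-form satisfying the two wedge conditions are invariant under such $\phi$, so pulling $\alpha$ back by $\phi$ transfers both conclusions to $\sigma$ itself.

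I expect the only genuinely technical point to be the bookkeeping in the $p_1$-expansion of $\omega^n$ in the ``if'' direction, namely verifying that the constant term vanishes and that the linear coefficient is exactly $\alpha\wedge d\alpha\wedge\sigma^{n-2}$. This is, however, essentially the computation already performed in the proof of Theorem \ref{4-dim}, so no new obstacle arises; the ``only if'' direction is then a direct reading-off of Theorem \ref{4-dim}(a) combined with the diffeomorphism-invariance of the two conditions.
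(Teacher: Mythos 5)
Your proposal is correct and takes essentially the same route as the paper: the paper presents this realization theorem as a direct consequence of Theorem \ref{4-dim}(a), with the ``if'' direction given by precisely the expansion of $\omega^n$ for $\omega=d(p_1\pi^{\ast}\alpha)+\pi^{\ast}\sigma$ already carried out in that theorem's proof, and the ``only if'' direction by reading off the normal form together with equation (\ref{rank}). Your added bookkeeping (diffeomorphism-invariance of the two wedge conditions under the induced map $\phi=\Phi|_{\{p_1=0\}}$) is exactly the detail the paper leaves implicit, and it is handled correctly.
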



\section{Determination by the restriction of $\omega$ to $T\Sigma_2$ and
the canonical orientation of $\Sigma_2$.}\label{deter}

In this section we find sufficient conditions to determine the equivalence class of a singular symplectic form by its restriction to the structurally smooth Martinet hypersurface $\Sigma_2$ and the canonical orientation of $\Sigma_2$.

Let $j^1_0f$ denote the $1$-jet at $0$ of a smooth ($\mathbb K$-analytic) function-germ $f:\mathbb K^{2n-1} \rightarrow \mathbb K$. The vector space of all $1$-jets at $0$ of smooth $\mathbb K$-analytic) function-germs on $\mathbb K^{2n-1}$ is denoted by $J^1_0(\mathbb K^{2n-1},\mathbb K)$.

Let $\sigma$ be a closed $2$-form-germ at $0$ on $\mathbb K^{2n-1}$. Then  the closed $(2n-2)$-form-germ $\sigma^{n-1}$ at $0$ on $\mathbb K^{2n-1}$ has the following form in a local coordinates set $q=(q_1,\cdots,q_{2n-1})$ on $\mathbb K^{2n-1}$
$$\sigma^{n-1}=\sum_{i=1}^{2n-1} g_i dq_1\wedge\cdots \wedge dq_{i-1}\wedge dq_{i+1}\wedge\cdots\wedge dq_{2n-1},$$
where $g_i:\mathbb K^{2n-1}\rightarrow \mathbb K$ is a smooth ($\mathbb K$-analytic) function-germ at $0$ for $i=1,\cdots,2n-1$.

 Hence the $1$-jet at $0$ of $2n-2$-form-germ $\sigma^{n-1}$ has the following form
 $$j^1_0\sigma^{n-1}=\sum_{i=1}^{2n-1} j^1_0g_idq_1\wedge\cdots \wedge dq_{i-1}\wedge dq_{i+1}\wedge\cdots\wedge dq_{2n-1}.$$
  We denote by $\text{span} j^1_0\sigma^{n-1}$ the vector space spanned by coefficients of $j^1_0\sigma^{n-1}$
  $$
  \text{span} j^1_0\sigma^{n-1}=\text{span}\left(j^1_0g_1,\cdots,j^1_0g_{2n-1}\right).
  $$
If $g_i(0)=0$ then $j^1_0g_i=\sum_{k=1}^{2n-1}\frac{\partial g_i}{\partial q_k}(0)q_k$. Thus it easy to check that if $\text{rank}(\sigma|_0)=2n-4$ then the definition of $\text{span}j^1_0\sigma^{n-1}$ does not depend on the choice of a local coordinate system .

\begin{thm} \label{tw-deter}
Let $\omega_0$ and $\omega_1$ be germs of smooth ($\mathbb K$-analytic)
singular symplectic forms on $\mathbb K^{2n}$ with a common
structurally smooth Martinet hypersurface $\Sigma_2$ at $0$ and
$\text{rank}(\iota^{\ast}\omega_0|_{0})=\text{rank}(\iota^{\ast}\omega_1|_0) =2n-4$.

If $\iota^{\ast}\omega_0=\iota^{\ast}\omega_1=\sigma$, $\omega_0$
and $\omega_1$ define the same canonical orientation of $\Sigma_2$
and the dimension of the vector space $\text{span}j_0^1\sigma^{n-1}$ is $2$ then there exists a smooth ($\mathbb K$-analytic)
diffeomorphism-germ  $\Psi :(\mathbb K^{2n},0)\rightarrow (\mathbb
K^{2n},0)$ such that
\[
\Psi ^{\ast }\omega_1 =\omega_0.
\]
\end{thm}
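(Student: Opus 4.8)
The plan is to reduce the statement to part (b) of Theorem \ref{4-dim}. First I would bring both forms to a common normal form: since $\iota^{\ast}\omega_0=\iota^{\ast}\omega_1=\sigma$ and the two forms share the structurally smooth hypersurface $\Sigma_2$, part (a) of Theorem \ref{4-dim} (whose normalising diffeomorphism can be chosen to restrict to the identity on $\Sigma_2$) lets me assume $\omega_0=d(p_1\pi^{\ast}\alpha_0)+\pi^{\ast}\sigma$ and $\omega_1=d(p_1\pi^{\ast}\alpha_1)+\pi^{\ast}\sigma$ with the same $\sigma$, where $\text{rank}(\sigma|_0)=2n-4$, $\alpha_i\wedge\sigma^{n-1}=0$ and $\alpha_i\wedge d\alpha_i\wedge\sigma^{n-2}|_0\ne0$. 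Because the canonical orientation is represented by $\alpha_i\wedge d\alpha_i\wedge\sigma^{n-2}$, the hypothesis that $\omega_0$ and $\omega_1$ induce the same orientation is exactly condition (\ref{orientation1}) of Theorem \ref{4-dim}. Thus everything comes down to verifying condition (\ref{lineincot}), namely $\alpha_0|_0\wedge\alpha_1|_0\wedge\sigma^{n-2}|_0=0$.

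Next I would translate (\ref{lineincot}) into linear algebra at $0$. Set $W=T_0\mathbb K^{2n-1}$ and $K=\ker(\sigma|_0)$, so $\dim K=3$ since $\text{rank}(\sigma|_0)=2n-4$. In a basis adapted to the splitting $W^{\ast}=K^{\perp}\oplus(K^{\ast}\text{-part})$ the form $\sigma^{n-2}|_0$ is a nonzero multiple of the volume form of the $(2n-4)$-dimensional space $K^{\perp}$; hence for a covector $\beta$ one has $\beta\wedge\sigma^{n-2}|_0\ne0$ iff the restriction $\beta|_K\in K^{\ast}$ is nonzero, and $\alpha_0|_0\wedge\alpha_1|_0\wedge\sigma^{n-2}|_0=0$ iff $\alpha_0|_0|_K$ and $\alpha_1|_0|_K$ are proportional. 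Since $\alpha_i\wedge d\alpha_i\wedge\sigma^{n-2}|_0\ne0$ forces $\alpha_i|_0\wedge\sigma^{n-2}|_0\ne0$, each $\alpha_i|_0|_K$ is nonzero, so it suffices to show these two covectors lie on a common line in $K^{\ast}$.

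The heart of the argument uses the hypothesis $\dim\text{span}\,j^1_0\sigma^{n-1}=2$. Writing the $(2n-2)$-form $\sigma^{n-1}$ as $\sigma^{n-1}=\chi\rfloor\nu$ for a volume form $\nu$ and a vector field $\chi$ on $\mathbb K^{2n-1}$, we have $\chi(0)=0$ (because $\sigma^{n-1}|_0=0$), and the rows of the linearisation $A=D\chi|_0\colon W\to W$ are, up to sign, the coefficients $j^1_0g_i$ of $j^1_0\sigma^{n-1}$, so $\text{rank}\,A=\dim\text{span}\,j^1_0\sigma^{n-1}=2$. From the identity $\beta\wedge(\chi\rfloor\nu)=\beta(\chi)\,\nu$ together with $\alpha_i\wedge\sigma^{n-1}=0$ I get $\alpha_i(\chi)\equiv0$; differentiating at $0$ and using $\chi(0)=0$ gives $\alpha_i|_0\in(\text{Im}\,A)^{\perp}$. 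Moreover, on the dense set where $\text{rank}(\sigma)=2n-2$ the vector $\chi$ spans $\ker\sigma$, so $\chi\rfloor\sigma\equiv0$ everywhere by continuity; differentiating this relation at $0$ and using $\chi(0)=0$ yields $\sigma|_0(Av,\cdot)=0$ for all $v$, that is $\text{Im}\,A\subseteq K$.

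Finally I would count dimensions. The restriction map $W^{\ast}\to K^{\ast}$ has kernel $K^{\perp}$, so the image of $(\text{Im}\,A)^{\perp}$ has dimension $\dim(\text{Im}\,A+K)-2=\dim K-2=1$, using $\text{Im}\,A\subseteq K$, $\text{rank}\,A=2$ and $\dim K=3$. Hence $\alpha_0|_0|_K$ and $\alpha_1|_0|_K$ both lie on this single line and are therefore proportional, which is precisely (\ref{lineincot}); Theorem \ref{4-dim}(b) then produces the required diffeomorphism $\Psi$. I expect the third paragraph to be the main obstacle: isolating the correct intrinsic object (the vector field $\chi$ with $\sigma^{n-1}=\chi\rfloor\nu$), proving the inclusion $\text{Im}\,A\subseteq K$, and checking that $\text{rank}\,A$ and this inclusion are independent of the auxiliary choices of $\nu$ and coordinates, together with justifying the density argument behind $\chi\rfloor\sigma\equiv0$ in the smooth category.
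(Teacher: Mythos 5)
Your proposal is correct, and its skeleton is exactly the paper's: normalise both forms via Theorem \ref{4-dim}(a) keeping the common $\sigma$, observe that equality of canonical orientations is condition (\ref{orientation1}), and then reduce everything to verifying condition (\ref{lineincot}). Where you genuinely diverge is in how (\ref{lineincot}) is extracted from the hypothesis $\dim\text{span}\,j^1_0\sigma^{n-1}=2$. The paper isolates this step as Lemma \ref{lemat} and proves it in adapted Darboux coordinates: writing $\sigma=\sum_{k}dx_{2k-1}\wedge dx_{2k}+(\text{terms vanishing at }0)$, it computes $j^1_0\sigma^{n-1}$ explicitly, so that the conditions $\alpha_i\wedge\sigma^{n-1}=0$ become linear relations $\sum_{j}f_{ij}(0)\,j^1_0a_j=0$ among the three jets $j^1_0a_j$, with coefficients $f_{ij}(0)$ equal to the $K$-components of $\alpha_i|_0$; solving these relations, using that exactly two of the $j^1_0a_j$ are independent, gives $\alpha_1\wedge\sigma^{n-2}|_0=\frac{f_{13}(0)}{f_{03}(0)}\,\alpha_0\wedge\sigma^{n-2}|_0$. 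Your argument proves the same statement invariantly, through the vector field $\chi$ with $\chi\rfloor\nu=\sigma^{n-1}$, its linearisation $A$ at $0$, the relations $\alpha_i|_0\in(\text{Im}\,A)^{\perp}$ and $\text{Im}\,A\subseteq K=\ker(\sigma|_0)$, $\text{rank}\,A=2$, and the resulting one-dimensionality of the image of $(\text{Im}\,A)^{\perp}$ in $K^{\ast}$. These are the same linear-algebra facts in different clothing (the $j^1_0a_j$ are the $K$-components of $A$, and the absence of all other first-order coefficients in $j^1_0\sigma^{n-1}$ in adapted coordinates is your inclusion $\text{Im}\,A\subseteq K$), but your packaging buys something the paper only asserts in passing: the independence of $\dim\text{span}\,j^1_0\sigma^{n-1}$ from the coordinates (and from $\nu$), since it is $\text{rank}\,D\chi|_0$ with $\chi(0)=0$; it also exhibits the invariant line in $K^{\ast}$, i.e.\ the kernel $\ker(\alpha_i\wedge\sigma^{n-2})|_0=\ker\omega_i^{n-1}|_0$, intrinsically. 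The paper's computation, in exchange, is elementary and self-contained.

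One small repair in your third paragraph: the set where $\text{rank}\,\sigma=2n-2$ need not be dense a priori, so the density-plus-continuity justification of $\chi\rfloor\sigma\equiv0$ is not quite right as stated. But no density is needed: at any point $p$ either $\chi(p)=0$, and the identity is trivial there, or $\sigma^{n-1}|_p=\chi(p)\rfloor\nu|_p\ne0$, in which case $\text{rank}(\sigma|_p)=2n-2$ and $\chi(p)\in\ker(\sigma^{n-1}|_p)=\ker(\sigma|_p)$ since $\chi\rfloor(\chi\rfloor\nu)=0$. So the identity holds pointwise, in the smooth as well as the analytic categories, and the rest of your argument goes through unchanged.
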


The proof is based on the following lemma.
\begin{lem}\label{lemat}

Let $\sigma$ be a closed smooth ($\mathbb K$-analytic) $2$-form-germ at $0$ on $\mathbb K^{2n-1}$ such that $\text{rank} (\sigma|_0)=2n-4$. Let $\alpha_0$, $\alpha_1$ be smooth ($\mathbb K$-analytic) $1$-form-germs at $0$ on $\mathbb K^{2n-1}$ such that for $i=0,1$
\begin{equation}\label{adas}
\alpha_i\wedge d\alpha_i\wedge \sigma^{n-2}|_0\ne 0
 \end{equation}
\begin{equation}\label{as}
\alpha_i\wedge \sigma^{n-1}= 0
\end{equation}
If the dimension of a vector space $\text{span} \ j^1_0\sigma^{n-1}$ is $2$  then
there exists a number $A\ne 0$ such that $\alpha_0\wedge \sigma^{n-2}|_0=A \alpha_1\wedge \sigma^{n-2}|_0$.
\end{lem}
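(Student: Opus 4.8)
The plan is to reduce everything to linear algebra at the origin. Set $V=T_0\mathbb K^{2n-1}$ and $\sigma_0=\sigma|_0$. Since $\mathrm{rank}(\sigma_0)=2n-4$, the kernel $K=\ker(\sigma_0)\subset V$ is $3$-dimensional. I would choose linear coordinates $q=(q_1,\dots,q_{2n-1})$ adapted to a splitting $V=W\oplus K$ with $K=\mathrm{span}(\partial_{q_{2n-3}},\partial_{q_{2n-2}},\partial_{q_{2n-1}})$, so that $\sigma_0^{n-2}=c\,dq_1\wedge\cdots\wedge dq_{2n-4}|_0$ for some $c\neq0$. The first observation is that wedging a $1$-form with $\sigma_0^{n-2}$ only detects its component along $dq_{2n-3},dq_{2n-2},dq_{2n-1}$, i.e. its restriction to $K$; moreover the three forms $dq_j\wedge\sigma_0^{n-2}$ for $j\in\{2n-3,2n-2,2n-1\}$ are linearly independent. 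Hence $\alpha_0\wedge\sigma^{n-2}|_0$ and $\alpha_1\wedge\sigma^{n-2}|_0$ are proportional \emph{if and only if} the restrictions $\alpha_0|_0|_K,\ \alpha_1|_0|_K\in K^{\ast}$ are proportional, and this is what I will prove.

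Next I would record two consequences of the hypotheses. From (\ref{adas}): expanding $\alpha_i\wedge d\alpha_i\wedge\sigma^{n-2}|_0=\alpha_i|_0\wedge(d\alpha_i)|_0\wedge\sigma_0^{n-2}$ and again using that $\sigma_0^{n-2}$ already fills up $dq_1\wedge\cdots\wedge dq_{2n-4}$, the only surviving term must come from the $\Lambda^3$-part in $dq_{2n-3},dq_{2n-2},dq_{2n-1}$ of $\alpha_i|_0\wedge(d\alpha_i)|_0$, whose nonvanishing forces $\alpha_i|_0|_K\neq0$. So both restrictions are \emph{nonzero}. For the dimension hypothesis I would compute the $1$-jet of $\sigma^{n-1}$: writing $\sigma=\sigma_0+\tau+O(|q|^2)$ with $\tau$ the linear part and using $\sigma_0^{n-1}=0$, one gets $j^1_0\sigma^{n-1}=(n-1)\,\sigma_0^{n-2}\wedge\tau$. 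As above only the kernel-block entries $\tau_{kl}$ with $k,l\in\{2n-3,2n-2,2n-1\}$ survive, so the coefficients $g_k$ of $\sigma^{n-1}$ (where $\widehat{dq_k}$ denotes the wedge of all $dq_j$ with $j\neq k$) satisfy $j^1_0g_k=0$ for $k\le 2n-4$, and $\mathrm{span}\,j^1_0\sigma^{n-1}=\mathrm{span}(j^1_0g_{2n-3},j^1_0g_{2n-2},j^1_0g_{2n-1})$; by assumption this span is $2$-dimensional.

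Finally I would extract the first-order part of (\ref{as}). Writing $\alpha_i=\sum_k a_{ik}(q)\,dq_k$ and $\sigma^{n-1}=\sum_k g_k(q)\,\widehat{dq_k}$, the identity $\alpha_i\wedge\sigma^{n-1}=0$ reads $\sum_k(-1)^{k-1}a_{ik}(q)g_k(q)\equiv0$. Since every $g_k(0)=0$, its order-$1$ part is $\sum_k(-1)^{k-1}a_{ik}(0)\,j^1_0g_k=0$, which by the previous paragraph involves only $k\in\{2n-3,2n-2,2n-1\}$. In other words, the vector $\big(a_{i,2n-3}(0),a_{i,2n-2}(0),a_{i,2n-1}(0)\big)$ representing $\alpha_i|_0|_K$ lies in the kernel of the linear map $L:\mathbb K^3\to V^{\ast}$ sending the $j$-th standard basis vector to $(-1)^{k-1}j^1_0g_{k}$ with $k=2n-4+j$, whose image is exactly $\mathrm{span}\,j^1_0\sigma^{n-1}$, of dimension $2$; hence $\ker L$ is $1$-dimensional. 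Both $\alpha_0|_0|_K$ and $\alpha_1|_0|_K$ therefore lie on this single line and, being nonzero, are proportional, which produces the required $A\neq0$.

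The main obstacle is precisely the bookkeeping of the middle two paragraphs: one must verify that both hypotheses — the rank-$2$ condition on $j^1_0\sigma^{n-1}$ and the constraint (\ref{as}) — collapse to statements about the \emph{same} rank-$2$ linear map built from the kernel block of the linear part of $\sigma$, so that together they force $\alpha_0|_0|_K$ and $\alpha_1|_0|_K$ into a common $1$-dimensional subspace of $K^{\ast}$. Once the index and sign tracking confirming $j^1_0\sigma^{n-1}=(n-1)\sigma_0^{n-2}\wedge\tau$ and the pairing of $\alpha_i|_0|_K$ against these coefficients is in place, the conclusion is immediate from $\dim\ker L=1$.
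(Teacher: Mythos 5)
Your proof is correct and takes essentially the same route as the paper's: coordinates adapted to $\ker(\sigma|_0)$, the identification of $\text{span}\, j^1_0\sigma^{n-1}$ with the span of the $1$-jets of the three kernel-block coefficients of $\sigma$, nonvanishing of $\alpha_i|_0$ on the kernel from $\alpha_i\wedge d\alpha_i\wedge\sigma^{n-2}|_0\ne 0$, and extraction of the first-order part of $\alpha_i\wedge\sigma^{n-1}=0$. The only difference is organizational: where the paper eliminates $j^1_0a_3$ and invokes linear independence of $j^1_0a_1, j^1_0a_2$, you package the identical linear algebra as rank–nullity for the map $L$, so that both restrictions $\alpha_0|_0|_K$ and $\alpha_1|_0|_K$ lie on the same one-dimensional kernel line.
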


\begin{proof}[Proof of Lemma \ref{lemat}]  Since $\text{rank}(\sigma|_0)=2n-4$, there exists a local coordinate system  $(x_1,\cdots,x_{2n-4},y_1,y_2,y_3)$ on $\mathbb K^{2n-1}$ and function-germs $a_i, b_{ij}, c_{ij}$ on $\mathbb K^{2n-1}$ vanishing at $0$ such that
\begin{eqnarray}\label{sigma}
\sigma=\sum_{k=1}^{n-2}dx_{2k-1}\wedge dx_{2k}+\sum_{1\le i < j\le 2n-4} c_{ij} dx_i\wedge dx_j \\ +\sum_{i=1}^3\sum_{j=1}^{2n-4} b_{ij} dy_i\wedge dx_j+ \sum_{\{i,j,k\}=\{1,2,3\} \ j<k}a_i dy_j\wedge dy_k. \nonumber
\end{eqnarray}

It implies that the $1$-jet of $\sigma^{n-1}$ at $0$ has the following form
\begin{equation}\label{1-jet}
j^1_0 \sigma^{n-1}= \sum_{\{i,j,k\}=\{1,2,3\} \ j<k}j^1_0a_i dy_j\wedge dy_k \wedge dx_1 \wedge \cdots \wedge dx_{2n-4},
\end{equation}
where $j^1_0a_i$ denotes the $1$-jet of the function-germ $a_i$ at $0$ for $i=1,2,3$.

The vector space $\text{span}j^1_0 \sigma^{n-1}$ is spanned by $j^1_0a_1, j^1_0a_2, j^1_0a_3$.

There exist function-germs $f_{ij}$ and $g_{ik}$ for $i=0,1$, $j=1,2,3$, $k=1,\cdots,2n-4$ such that
$$\alpha_i=\sum_{j=1}^3 f_{ij}dy_j+\sum_{k=1}^{2n-4} g_{ik}dx_k.$$
By (\ref{adas}) we get that $f_{01}\ne0$ or $f_{02}\ne 0$ or $f_{03}\ne 0$. Without loss of generality we may assume that $f_{03}\ne 0$, since we can change a coordinate system  replacing $y_j$ with $y_3$ if $f_{03}=0$ and $f_{0j}\ne 0$ for $j\ne 3$.

By (\ref{as}) we get $j^1_0(\alpha_0\wedge \sigma^{n-1})=0$. By (\ref{1-jet}) it implies that $$f_{01}(0)j_0^1a_1+f_{02}(0)j_0^1a_2+f_{03}(0)j_0^1a_3=0,$$
 since $a_i(0)=0$ for $i=1,2,3$.
Since $f_{03}(0)\ne 0$ we get that
\begin{equation}\label{a3}
j_0^1 a_3=-\frac{f_{01}(0)}{f_{03}(0)}j_0^1a_1-\frac{f_{02}(0)}{f_{03}(0)}j_0^1a_2.
\end{equation}
Thus  the space $\text{span} \ j^1_0 \ \sigma^{n-1}$ is spanned by $j^1_0a_1, j^1_0a_2$. Since $\dim \text{span} \ j^1_0 \sigma^{n-1}=2$ the 1-jets $j^1_0a_1, j^1_0a_2$ are $\mathbb K$-linearly independent.
On the other hand by  (\ref{as}) we get $j^1_0(\alpha_1\wedge \sigma^{n-1})=0$. By (\ref{1-jet}) it implies that $$f_{11}(0)j_0^1a_1+f_{12}(0)j_0^1a_2+f_{13}(0)j_0^1a_3=0,$$
since $a_i(0)=0$ for $i=1,2,3$.
By (\ref{a3}) it implies that
$$\left(f_{11}(0)-\frac{f_{13}(0)}{f_{03}(0)}f_{01}(0)\right)j_0^1a_1+\left(f_{12}(0)-\frac{f_{13}(0)}{f_{03}(0)}f_{02}(0)\right)j_0^1a_2=0.$$
Since the 1-jets $j^1_0a_1, j^1_0a_2$ are $\mathbb K$-linearly independent we get that
\begin{equation}\label{final}
f_{11}(0)-\frac{f_{13}(0)}{f_{03}(0)}f_{01}(0)=f_{12}(0)-\frac{f_{13}(0)}{f_{03}(0)}f_{02}(0)=0.
\end{equation}
By (\ref{sigma}) we get that $\sigma^{n-2}|_0=(n-2)!dx_1\wedge\cdots \wedge dx_{2n-4}|_0$. Thus we have for $i=0,1$ $$\alpha_i\wedge \sigma^{n-2}|_0=(n-2)!\sum_{j=1}^3 f_{ij}(0) dy_i\wedge dx_1 \wedge \cdots \wedge dx_{2n-4}|_0.$$
By (\ref{final}) it implies that
$\alpha_1\wedge \sigma^{n-2}|_0=\frac{f_{13}(0)}{f_{03}(0)}\alpha_0\wedge \sigma^{n-2}|_0$.
\end{proof}

\begin{proof}[Proof of Theorem \ref{tw-deter}]
By Theorem \ref{4-dim} we can find a local coordinate system  such that the germs $\omega_0$ and $\omega_1$ have the following form
 $\omega_0=d\left( p_{1}\pi ^{\ast }\alpha_0 \right)
+\pi ^{\ast }\sigma$ and $\omega_1=d\left( p_{1}\pi ^{\ast
}\alpha_1 \right) +\pi ^{\ast }\sigma$, where $\alpha_0, \alpha_1, \sigma$ are form-germs satisfying the assumptions of Lemma \ref{lemat}. Thus there exists a number $A\ne 0$ such that $\alpha_0\wedge \sigma^{n-2}|_0=A \alpha_1\wedge \sigma^{n-2}|_0$. By Theorem \ref{4-dim} it implies that there exists a smooth ($\mathbb K$-analytic)
diffeomorphism-germ  $\Psi :(\mathbb K^{2n},0)\rightarrow (\mathbb
K^{2n},0)$ such that
\[
\Psi ^{\ast }\omega_1 =\omega_0.
\]
\end{proof}

\begin{ex}

Let $\omega$ be the following closed $2$-form-germ on $\mathbb K^{2n}$
\begin{eqnarray}
&\omega=d(p_1(dy_3+y_1dy_2)) +\sum_{k=1}^{n-2}dx_{2k-1}\wedge dx_{2k}+& \label{2-form}  \\
&(dy_3+y_1dy_2)\wedge(b(y_1,y_2,y_3)dy_1-a(y_1,y_2,y_3)dy_2)& \nonumber
\end{eqnarray}
where $(p_1,y_1,y_2,y_3,x_1,\cdots,x_{2n-4})$ is a coordinate system  on $\mathbb K^{2n}$, $b$ is a smooth ($\mathbb K$-analytic) function-germ on $\mathbb K^3$ vanishing at $0$, $h$ is a smooth ($\mathbb K$-analytic) function-germ on $\mathbb K^2$ vanishing at $0$ and
\begin{equation}\label{a-formula}
a(y_1,y_2,y_3)=\int_{0}^{y_1}\left(t\frac{\partial b}{\partial y_3}(t,y_2,y_3)-\frac{\partial b}{\partial y_2}(t,y_2,y_3)\right)dt +h(y_2,y_3).
\end{equation}

It is easy to see that the Martinet hypersurface is $\Sigma_2=\{p_1=0\}$
and the restriction of $\omega$ to $T\Sigma_2$ has the following form
$$
\sigma=(dy_3+y_1dy_2)\wedge(b(y_1,y_2,y_3)dy_1-a(y_1,y_2,y_3)dy_2)
+\sum_{k=1}^{n-2}dx_{2k-1}\wedge dx_{2k}.
$$

Thus $j_0^1\sigma^{n-1}$ is equal to
$$(n-2)!\left((j_0^1b) dy_3 \wedge dy_1 + (j_0^1a) dy_2 \wedge dy_3\right) \wedge dx_1\wedge\cdots \wedge dx_{2n-4}. $$
Then the space $\text{span} \  j_0^1\sigma^{n-1}$ is $\text{span} \left\{j_0^1a, j_0^1b\right\}$. From (\ref{a-formula}) we get
$$a(0)=0, \ \ \frac{\partial a}{\partial y_1}(0)=-\frac{\partial b}{\partial y_2}(0), \ \ \frac{\partial a}{\partial y_i}(0)=\frac{\partial h}{\partial y_i}(0) \ \text{for} \ i=2,3.$$

Hence  $\text{span} \  j_0^1\sigma^{n-1}$ is spanned by
$$-\frac{\partial b}{\partial y_2}(0)y_1+\frac{\partial h}{\partial y_2}(0)y_2+\frac{\partial h}{\partial y_3}(0)y_3, \ \ \frac{\partial b}{\partial y_1}(0)y_1+\frac{\partial b}{\partial y_2}(0)y_2+\frac{\partial b}{\partial y_3}(0)y_3.$$

Thus $\dim \text{span} \  j_0^1\sigma^{n-1}$ is $2$ if and only if the rank of the following matrix is $2$.

$$\left[\begin{array}{ccc}
-\frac{\partial b}{\partial y_2}(0)&\frac{\partial h}{\partial y_2}(0)&\frac{\partial h}{\partial y_3}(0)\\
\frac{\partial b}{\partial y_1}(0)&\frac{\partial b}{\partial y_2}(0)&\frac{\partial b}{\partial y_3}(0)
\end{array}\right]$$

For $n=2$ any closed $2$-form-germ  satisfying the assumptions of Theorem \ref{4-dim} is equivalent to  (\ref{2-form}) in a coordinate-set $(p_1,y_1,y_2,y_3)$ on $\mathbb K^4$, since any contact form on $\mathbb K^3=\{p_1=0\}$ is equivalent to $dy_3+y_1dy_2$.

The set-germ $\Sigma_{22}=\{y\in \Sigma_2:\sigma|_y=0\}$ can be described as $$\{y\in \Sigma_2:a(y)=b(y)=0\}.$$ If  $\dim \text{span} \  j_0^1\sigma^{n-1}$ is $2$ then $\Sigma_{22}$ is a germ
of a smooth curve on $\Sigma_2$.

For $\mathbb K=\mathbb R$ if $(\frac{\partial b}{\partial y_2}(0))^2+\frac{\partial b}{\partial y_1}(0)\frac{\partial h}{\partial y_2}(0)$ is positive then $\omega$ has a  hyperbolic $\Sigma_{220}$ singularity, if it is negative then $\omega$
has an elliptic $\Sigma_{220}$ singularity and if it is zero then $\omega$
has a parabolic $\Sigma_{221}$  singularity \cite{Martinet}.  Roussarie has shown the stability of $\Sigma_{220}$ singularities \cite{Roussarie}. Golubitsky and Tischner have proved that $\Sigma_{221}$ singularity is not  stable \cite{G_T}.

 The normal forms of  $\Sigma_{220}$ singularities are  presented below
 \begin{eqnarray}
 &\text{hyperbolic} \  \Sigma_{220} :& \nonumber \\
 & d(p_1(dy_3+y_1dy_2)) +(dy_3+y_1dy_2)\wedge(y_1dy_1-y_2dy_2),&\nonumber \\
 & &\nonumber \\
 &\text{elliptic} \  \Sigma_{220} :& \nonumber \\
  & \ d(p_1(dy_3+y_1dy_2)) +(dy_3+y_1dy_2)\wedge(y_1dy_1+y_2dy_2).&\nonumber
\end{eqnarray}

\end{ex}


\section{Determination by the restriction of $\omega$ to $T\Sigma_2$ in dimension
4.}\label{deter4}

In \cite{4dim} we proved the following result on determination of the equivalence class of a $\mathbb C$-analytic singular symplectic form-germ $\omega$ by its restriction to the structurally smooth Martinet hypersurface. 

\begin{thm} \label{C-ana}
Let $\omega_0$ and $\omega_1$ be germs of  $\mathbb C$-analytic
singular symplectic forms on $\mathbb C^4$ with a common
structurally smooth Martinet hypersurface $\Sigma_2$ at $0$ and
$\text{rank}(\iota^{\ast}\omega_0|_{0})=\text{rank}(\iota^{\ast}\omega_1|_0) =0$.

If $\iota^{\ast}\omega_0=\iota^{\ast}\omega_1=\sigma$ and there
does not exist  a $\mathbb C$-analytic vector field-germ $X$
on $\Sigma_2$ at $0$ such that $X\rfloor \sigma=0$ and $X|_0\ne 0$
then there exists a $\mathbb C$-analytic diffeomorphism-germ
$\Psi :(\mathbb C^{4},0)\rightarrow (\mathbb C^{4},0)$ such that
\[
\Psi ^{\ast }\omega_1 =\omega_0.
\]
\end{thm}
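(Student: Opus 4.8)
The statement is the $4$-dimensional ($n=2$) specialization of the determination problem, so the strategy is to reduce it to the normal-form machinery of Theorem \ref{4-dim}. By part (a) of Theorem \ref{4-dim}, after a diffeomorphism I may assume
\[
\omega_0=d\left(p_1\pi^{\ast}\alpha_0\right)+\pi^{\ast}\sigma,\qquad
\omega_1=d\left(p_1\pi^{\ast}\alpha_1\right)+\pi^{\ast}\sigma,
\]
where $\sigma=\iota^{\ast}\omega_0=\iota^{\ast}\omega_1$ is a closed $2$-form-germ on $\Sigma_2=\{p_1=0\}\cong\mathbb C^3$ of rank $0$ at $0$, and $\alpha_0,\alpha_1$ are $1$-form-germs satisfying $\alpha_i\wedge\sigma=0$ (the $n=2$ form of $\alpha_i\wedge\sigma^{n-1}=0$) and $\alpha_i\wedge d\alpha_i|_0\neq 0$. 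In view of part (b) of Theorem \ref{4-dim}, it suffices to verify the two hypotheses there: the orientation condition \eqref{orientation1} and the tangency condition \eqref{lineincot}, which for $n=2$ reads $\alpha_1|_0\wedge\alpha_0|_0=0$, i.e.\ $\alpha_0|_0$ and $\alpha_1|_0$ are proportional as covectors at $0$. In the $\mathbb C$-analytic category condition \eqref{orientation1} is vacuous (Remark \ref{C-orient} shows orientation can always be normalized by the diffeomorphism $p_1\mapsto ip_1$), so the entire content is to establish $\alpha_1|_0\wedge\alpha_0|_0=0$.

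\textbf{Reducing the hypothesis.} The given assumption is that there is no nonzero $\mathbb C$-analytic vector field-germ $X$ on $\Sigma_2$ with $X\rfloor\sigma=0$ and $X|_0\neq 0$. I would translate this into an algebraic statement about the kernel of $\sigma$. Writing $\sigma=\sum_{i<j}c_{ij}\,dq_i\wedge dq_j$ in coordinates $q=(q_1,q_2,q_3)$ on $\Sigma_2=\mathbb C^3$ (with all $c_{ij}(0)=0$, since $\mathrm{rank}\,\sigma|_0=0$), a vector field $X=\sum_k X_k\partial_{q_k}$ lies in the kernel of $\sigma$ precisely when the coefficient functions satisfy the linear system $\sum_j c_{ij}X_j=0$ obtained by contracting. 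On $\mathbb C^3$ a generic closed $2$-form of rank $2$ has a $1$-dimensional kernel spanned by a single vector field proportional to the "curl" $(c_{23},-c_{13},c_{12})$; the nonexistence of a nonvanishing kernel field at $0$ is exactly the condition that these three coefficient functions $c_{23},c_{13},c_{12}$ have no common nonvanishing solution to the kernel equations through $0$. The plan is to show this forces the rank-$2$ stratum to approach $0$ in all directions, which pins down the $1$-jet data of $\sigma$.

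\textbf{The key step.} The crux is to connect the analytic kernel condition to the linear-algebra fact that $\alpha_0|_0\wedge\alpha_1|_0=0$. From $\alpha_i\wedge\sigma=0$ I differentiate/take $1$-jets: since $\sigma|_0=0$, the leading behavior of $\alpha_i\wedge\sigma$ at $0$ couples $\alpha_i|_0$ with the $1$-jet $j^1_0\sigma$. The $1$-form $\alpha_i|_0$ must annihilate, via the wedge relation, the directions in which $\sigma$ "opens up" to first order — that is, $\alpha_i|_0$ is constrained by the gradient data of the $c_{jk}$. My plan is to argue that the nonexistence of a nonzero kernel field for $\sigma$ forces this first-order constraint to have a $1$-dimensional solution space of admissible $\alpha_i|_0$ (up to scaling), so that $\alpha_0|_0$ and $\alpha_1|_0$, both satisfying it together with $\alpha_i\wedge d\alpha_i|_0\neq 0$, are necessarily proportional. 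Concretely I expect the relation $j^1_0(\alpha_i\wedge\sigma)=0$ to read $\alpha_i|_0\wedge j^1_0\sigma=0$, whence $\alpha_i|_0$ is determined up to scalar by the rank of $j^1_0\sigma$, and the no-kernel-field hypothesis is exactly what guarantees this $1$-jet has maximal relevant rank.

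\textbf{Main obstacle.} The hard part is the precise identification of the analytic condition (no nonvanishing kernel vector field of $\sigma$) with the algebraic condition needed to conclude proportionality of $\alpha_0|_0,\alpha_1|_0$; in particular I must rule out the degenerate possibility that the admissible covectors at $0$ form a space of dimension $\ge 2$, which would allow $\alpha_0|_0\wedge\alpha_1|_0\neq 0$. I anticipate this requires a careful analysis of how the vanishing of $\sigma$ at $0$ interacts with $\alpha_i\wedge\sigma=0$ beyond the $1$-jet level — possibly invoking the $\mathbb C$-analyticity to pass from first-order to a genuine functional relation via a Nullstellensatz-type argument, which is why the result is stated in the $\mathbb C$-analytic category rather than the smooth one. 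Once proportionality is secured, Theorem \ref{4-dim}(b) delivers the desired diffeomorphism $\Psi$ and completes the proof.
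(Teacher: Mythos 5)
Your reduction is the right frame and matches the strategy the paper uses for this family of results (the paper itself only quotes Theorem \ref{C-ana} from \cite{4dim}; its closest in-text proof is that of Theorem \ref{smooth}): apply Theorem \ref{4-dim}(a) to write $\omega_i=d(p_1\pi^{\ast}\alpha_i)+\pi^{\ast}\sigma$, note that the orientation hypothesis (\ref{orientation1}) is vacuous over $\mathbb C$ by Remark \ref{C-orient}, and reduce everything to proving $\alpha_0|_0\wedge\alpha_1|_0=0$. The gap is in your key step. You propose to obtain this proportionality from the $1$-jet relation $j^1_0(\alpha_i\wedge\sigma)=0$, claiming the no-kernel-field hypothesis "is exactly what guarantees this $1$-jet has maximal relevant rank". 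That claim is false. Writing $\sigma=A\,dy\wedge dz+B\,dz\wedge dx+C\,dx\wedge dy$ with $A,B,C$ vanishing at $0$, the $1$-jet relation only says $f(0)\,j^1_0A+g(0)\,j^1_0B+h(0)\,j^1_0C=0$, and this pins down $\alpha_i|_0$ up to scale precisely when $\dim\,\text{span}\{j^1_0A,j^1_0B,j^1_0C\}=2$ --- which is the hypothesis of Theorem \ref{tw-deter} (via Lemma \ref{lemat}), not of Theorem \ref{C-ana}. A counterexample to your identification: take $\sigma=(dz+x\,dy)\wedge(x^2dx-y^2dy)$, so $(A,B,C)=(y^2,x^2,-x^3)$. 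This $\sigma$ is closed, $\alpha_0=dz+x\,dy$ is a contact form with $\alpha_0\wedge\sigma=0$, and any analytic vector field in $\ker\sigma$ must be pointwise parallel to $(y^2,x^2,-x^3)$ (coprimality of $x^2$ and $y^2$ forces $X=u\cdot(y^2,x^2,-x^3)$), hence vanishes at $0$; so the hypothesis of Theorem \ref{C-ana} holds, yet $j^1_0\sigma=0$ and your $1$-jet constraint admits every covector at $0$, a $3$-dimensional space. Your fallback ("a Nullstellensatz-type argument beyond the $1$-jet level") is exactly the missing proof, not a sketch of it.

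What actually closes the gap is an algebraic division (syzygy) argument on the full relation rather than its $1$-jet, which is how the paper proves the analogous Theorem \ref{smooth} and how \cite{4dim} proceeds: choose coordinates with $\alpha_0=dz+x\,dy$, so $\alpha_0\wedge\sigma=0$ gives $C=-xB$; then $\alpha_1\wedge\sigma=0$ becomes $fA+B(g-xh)=0$, and one must show every analytic solution has the form $f=rB$, $g-xh=-rA$, which yields $\alpha_1|_0=h(0)\,dz$, proportional to $\alpha_0|_0$ (in my example above this works because $x^2,y^2$ is a regular sequence, even though all $1$-jets vanish). In the smooth and $\mathbb R$-analytic categories the paper assumes the generators of $I(\sigma)$ form a regular sequence precisely to license this division step; under the weaker no-kernel-field hypothesis of Theorem \ref{C-ana} the step genuinely requires $\mathbb C$-analytic input (this is why the theorem is stated only over $\mathbb C$), and that argument is the content of \cite{4dim}, which your proposal does not supply. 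As written, the proof is incomplete at its central point.
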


In the analogous result in $\mathbb R$-analytic category (\cite{4dim}) the fixed canonical orientation of the Martinet hypersurface is needed ( see Example \ref{orientation} )

\begin{thm} \label{R-ana}
Let $\omega_0$ and $\omega_1$ be germs of $\mathbb R$-analytic
singular symplectic forms on $\mathbb R^4$ with a common
structurally smooth Martinet hypersurface $\Sigma_2$ at $0$ and
$\text{rank}(\iota^{\ast}\omega_0|_{0})=\text(rank\iota^{\ast}\omega_1|_0) =0$.

If $\iota^{\ast}\omega_0=\iota^{\ast}\omega_1=\sigma$, $\omega_0$
and $\omega_1$ define the same canonical orientation of $\Sigma_2$
and there does not exist  an $\mathbb R$-analytic vector
field-germ $X$ on $\Sigma_2$ at $0$ such that $X\rfloor \sigma=0$ and
$X|_0\ne 0$ then there exists  an $\mathbb R$-analytic
diffeomorphism-germ $\Psi :(\mathbb R^{4},0)\rightarrow (\mathbb
R^{4},0)$ such that
\[
\Psi ^{\ast }\omega_1 =\omega_0.
\]
\end{thm}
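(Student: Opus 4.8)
The plan is to deduce the statement from Theorem~\ref{4-dim}(b) by verifying its two hypotheses, the genuine work lying in hypothesis~(\ref{lineincot}). First I would fix a coordinate system in which the common Martinet hypersurface is $\Sigma_2=\{p_1=0\}$ and apply part~(a) of Theorem~\ref{4-dim} to each form. Since $\iota^{\ast}\omega_0=\iota^{\ast}\omega_1=\sigma$, the relative Poincar\'e decomposition together with Proposition~\ref{rel-Darboux} lets me assume, after diffeomorphisms fixing $\Sigma_2$ pointwise, that $\omega_i=d(p_1\pi^{\ast}\alpha_i)+\pi^{\ast}\sigma$ with the \emph{same} $\sigma$, where the $\alpha_i$ are $1$-form-germs on $\Sigma_2=\mathbb R^3$. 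Because $n=2$ here, $\sigma^{n-2}=1$, and the structural conditions of part~(a) read $\alpha_i\wedge\sigma=0$ and $\alpha_i\wedge d\alpha_i|_0\ne 0$; in particular each $\alpha_i$ is a contact germ with $\alpha_i|_0\ne 0$. As the diffeomorphisms used restrict to the identity on $\Sigma_2$, the canonical orientation of $\omega_i$ coincides with that of its model, so the hypothesis that $\omega_0,\omega_1$ induce the same orientation becomes exactly condition~(\ref{orientation1}), the canonical orientation being the one given by the volume form $\alpha_i\wedge d\alpha_i$ on $\Sigma_2$.

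It remains to establish condition~(\ref{lineincot}), i.e.\ $\alpha_0|_0\wedge\alpha_1|_0=0$; this is where the assumption on $X$ enters and is the crux of the argument. I would argue by contradiction, supposing $\alpha_0|_0\wedge\alpha_1|_0\ne 0$. Fixing a volume form $\mathrm{vol}$ on $\Sigma_2$, define the $\mathbb R$-analytic vector field-germ $X$ by $X\rfloor\mathrm{vol}=\alpha_0\wedge\alpha_1$; then $\alpha_0(X)=\alpha_1(X)=0$, and $X$ is nonvanishing near $0$ with $X|_0\ne 0$ since $\alpha_0\wedge\alpha_1\ne 0$ on a neighbourhood. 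At any point $q$ where $\sigma$ has rank $2$, writing $\sigma_q=\mu\wedge\nu$ with $\mu,\nu$ independent, the relation $\alpha_i\wedge\sigma=0$ gives $\alpha_i|_q\in\mathrm{span}(\mu,\nu)$, so $\alpha_i|_q$ annihilates the line $\ker\sigma_q$; hence $\ker\sigma_q\subseteq\ker\alpha_0|_q\cap\ker\alpha_1|_q=\mathrm{span}(X_q)$, and comparing dimensions yields $\ker\sigma_q=\mathrm{span}(X_q)$ and $X\rfloor\sigma=0$ at $q$.

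The final point is a density argument, which is the main obstacle and the reason analyticity is needed. The assumption rules out a nonvanishing $X$ with $X\rfloor\sigma=0$, so in particular $\sigma\not\equiv 0$ (otherwise any coordinate field would violate the hypothesis); by $\mathbb R$-analyticity the locus $\{\sigma=0\}$ is then a proper analytic subset and the rank-$2$ locus of $\sigma$ is dense. Since $X\rfloor\sigma$ is continuous and vanishes on this dense set, $X\rfloor\sigma\equiv 0$ near $0$, producing an $\mathbb R$-analytic vector field-germ on $\Sigma_2$ with $X\rfloor\sigma=0$ and $X|_0\ne 0$, contradicting the hypothesis. Hence $\alpha_0|_0\wedge\alpha_1|_0=0$, condition~(\ref{lineincot}) holds, and together with condition~(\ref{orientation1}) Theorem~\ref{4-dim}(b) yields the desired $\mathbb R$-analytic diffeomorphism-germ $\Psi$ with $\Psi^{\ast}\omega_1=\omega_0$. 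This mirrors the proof of Theorem~\ref{C-ana}, the orientation hypothesis supplying the one extra condition~(\ref{orientation1}) absent in the $\mathbb C$-analytic case.
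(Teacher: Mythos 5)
Your proof is correct. One point of context: this paper does not actually write out a proof of Theorem \ref{R-ana} --- it is imported from \cite{4dim} --- so the only comparable argument in the text is the proof of Theorem \ref{smooth}, which works under the stronger hypothesis that the two generators of $I(\sigma)$ form a regular sequence. Your reduction is the same as the paper's general strategy: normalize both forms to $\omega_i=d(p_1\pi^{\ast}\alpha_i)+\pi^{\ast}\sigma$ by diffeomorphisms fixing $\Sigma_2$ pointwise, identify the canonical orientation with the sign of $\alpha_i\wedge d\alpha_i$ (so the orientation hypothesis becomes condition (\ref{orientation1})), and feed everything into Theorem \ref{4-dim}(b). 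Where you genuinely diverge is in verifying condition (\ref{lineincot}): you argue by contradiction, defining the analytic vector field $X$ by $X\rfloor\mathrm{vol}=\alpha_0\wedge\alpha_1$, showing that on the rank-$2$ locus of $\sigma$ one has $\ker\sigma_q=\mathrm{span}(X_q)$, and then using real-analyticity (the zero set of $\sigma\not\equiv 0$ is a proper analytic subset, hence the rank-$2$ locus is dense) to conclude $X\rfloor\sigma\equiv 0$, contradicting the hypothesis. The paper's written-out argument for Theorem \ref{smooth} is instead algebraic: normalize $\alpha_0=dz+xdy$, solve $af+b(g-xh)=0$ by the regular-sequence property to get $\alpha_1=rb\,dx+(xh-ra)\,dy+h\,dz$, hence $\alpha_1|_0=h(0)\,dz$ and $\ker\alpha_1|_0=\ker\alpha_0|_0$. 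The trade-off is instructive: the division argument works in the smooth category and even shows the orientation condition is automatic (so Theorem \ref{smooth} needs no orientation hypothesis), whereas your density argument exploits analyticity to operate under the strictly weaker hypothesis of Theorem \ref{R-ana} (the paper notes that the regular-sequence condition implies nonexistence of such an $X$, but not conversely), at the price of genuinely needing the orientation assumption --- which Example \ref{orientation} shows is unavoidable in that setting.
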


One can also find the normal form of a singular symplectic
form-germ on $\mathbb K^4$ at $0$ which does not satisfy the
assumptions of Theorems \ref{R-ana}, \ref{C-ana}. The following result is also
true in the smooth category (\cite{4dim}).

\begin{prop}\label{propC} Let $\omega$ be  a $\mathbb K$-analytic (smooth) singular symplectic form-germ
on $\mathbb K^4$ with a structurally smooth Martinet hypersurface
at $0$ and $\text{rank}(\iota^{\ast}\omega|_{0}) =0$.

If there exists a $\mathbb K$-analytic (smooth) vector field-germ $X$
on $\Sigma_2$ at $0$ such that $X\rfloor \sigma=0$ and $X|_0\ne 0$
then there exists of a $\mathbb K$-analytic (smooth) diffeomorphism-germ
$\Psi :(\mathbb K^{4},0)\rightarrow (\mathbb K^{4},0)$ such that
\[
\Psi ^{\ast }\omega =d(p_1(dx+C dy +zdy))+g(x,y)dx \wedge dy
\]
or
\[
\Psi ^{\ast }\omega =d(p_1(dy+C dx +zdx))+g(x,y)dx \wedge dy,
\]
where $C\in \mathbb K$ and $g$ is a $\mathbb K$-analytic
function-germ on $\mathbb K^4$ at $0$ that does not depend on
$p_1$ and $z$.
\end{prop}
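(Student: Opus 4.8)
We are given a singular symplectic form-germ $\omega$ on $\mathbb K^4$ with structurally smooth Martinet hypersurface $\Sigma_2$ at $0$, with $\text{rank}(\iota^{\ast}\omega|_0)=0$, and we assume there is a vector field-germ $X$ on $\Sigma_2$ with $X\rfloor\sigma=0$ and $X|_0\ne 0$. The goal is a normal form in which the restriction $\sigma=\iota^{\ast}\omega$ depends only on two of the three variables on $\Sigma_2$.

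Let me sketch the proof plan.

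The plan is to begin with the structural form supplied by part (a) of Theorem \ref{4-dim}. Since $\text{rank}(\iota^{\ast}\omega|_0)=0$ and the Martinet hypersurface is structurally smooth, that theorem gives coordinates $(p_1,q_1,q_2,q_3)$ on $\mathbb K^4$ with $\Sigma_2=\{p_1=0\}$ and a diffeomorphism reducing $\omega$ to $d(p_1\pi^{\ast}\alpha)+\pi^{\ast}\sigma$, where $\sigma$ is a closed $2$-form-germ on $\Sigma_2$ with $\sigma|_0=0$ (because the rank is $0$), $\alpha\wedge\sigma^{n-1}=0$, and $\alpha\wedge d\alpha\wedge\sigma^{n-2}|_0\ne 0$; here $n=2$, so these say $\alpha\wedge\sigma=0$ and $\alpha\wedge d\alpha|_0\ne 0$, i.e.\ $\alpha$ is a contact form-germ on $\Sigma_2=\mathbb K^3$. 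First I would use the characteristic vector field $X$. The hypothesis $X\rfloor\sigma=0$ with $X|_0\ne0$ says that the characteristic foliation of the closed $2$-form $\sigma$ on $\Sigma_2$ is nonsingular at $0$; I would straighten $X$ by a diffeomorphism of $\Sigma_2$ so that $X=\partial/\partial z$ in suitable coordinates $(x,y,z)$ on $\Sigma_2$. Then $X\rfloor\sigma=0$ forces the coefficients of $\sigma$ to be independent of $z$ and forces $dz$ not to appear, so $\sigma=g(x,y)\,dx\wedge dy$ for a function-germ $g$ with $g(0)=0$; extending these coordinates trivially in $p_1$ realizes $\pi^{\ast}\sigma=g(x,y)\,dx\wedge dy$ on $\mathbb K^4$.

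Next I would analyze the contact form $\alpha$. Since $\alpha\wedge\sigma=0$ and $\sigma=g\,dx\wedge dy$, wherever $g\ne0$ the $1$-form $\alpha$ lies in the span of $dx,dy$; but $\alpha$ is contact, so its $dz$-component cannot vanish identically, and by a contact-type normalization (a contactomorphism of $\mathbb K^3$ fixing the foliation structure appropriately) I would bring $\alpha$ to the model $dx+Cdy+zdy$ or $dy+Cdx+zdx$, the two cases corresponding to the two ways the characteristic direction of $\sigma$ can sit relative to the contact distribution, with the constant $C\in\mathbb K$ recording the residual $1$-jet invariant that cannot be scaled away. The dichotomy here is exactly the two normal forms in the statement. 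I would check that such a coordinate change can be chosen to preserve $\pi^{\ast}\sigma=g\,dx\wedge dy$ (since it acts only on the $(x,y,z)$-coordinates and fixes the characteristic foliation), and that it lifts to a diffeomorphism $\Psi$ of $\mathbb K^4$ with $\Psi^{\ast}\omega=d(p_1\pi^{\ast}\alpha')+\pi^{\ast}\sigma$ in the desired model.

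The main obstacle I expect is the simultaneous normalization: I must put $\alpha$ into one of the two contact models \emph{while} keeping $\sigma=g(x,y)\,dx\wedge dy$ fixed, since Theorem \ref{4-dim} allows freedom only in changing $\alpha$ within its equivalence class once $\sigma$ is pinned down. Concretely, the allowed coordinate changes are those diffeomorphisms of $\Sigma_2$ preserving both the characteristic foliation $\partial/\partial z$ of $\sigma$ and the level structure of $g$, together with the scaling $\Phi(p,q)=(Bp_1,\dots)$ from part (b) of Theorem \ref{4-dim}; I would have to verify this group is large enough to remove all of $\alpha$ except the constant $C$ and the branch choice, and that the relation $\alpha\wedge\sigma=0$ together with the contact condition $\alpha\wedge d\alpha|_0\ne0$ leaves precisely these invariants. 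Once the contact form is in model form and $\sigma$ is unchanged, the equality $\Psi^{\ast}\omega=d(p_1(dx+Cdy+zdy))+g\,dx\wedge dy$ (or its companion) follows directly from the displayed expression for $\omega$, completing the proof.
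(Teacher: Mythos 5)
Your first half is sound and is the natural route: apply Theorem \ref{4-dim}(a), then straighten $X$ to $\partial/\partial z$ by a diffeomorphism of $\Sigma_2$ lifted to $\mathbb K^4$ as $\mathrm{id}\times\phi$ (which preserves the shape $d(p_1\pi^{\ast}\alpha)+\pi^{\ast}\sigma$); then $X\rfloor\sigma=0$ kills the $dz$-terms of $\sigma$ and closedness of $\sigma$ (which you omit to invoke) kills the $z$-dependence, so $\sigma=g(x,y)\,dx\wedge dy$, $g(0)=0$. But then you assert something false: ``$\alpha$ is contact, so its $dz$-component cannot vanish identically.'' The form $dx+z\,dy$ is contact with identically zero $dz$-component, and indeed both target models have none. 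The true consequence of $\alpha\wedge\sigma=0$ is the opposite: writing $\alpha=f\,dx+h\,dy+k\,dz$, one gets $kg\equiv 0$, so $k\equiv 0$ on $\{g\ne 0\}$; hence (splitting off the case $g\equiv 0$, which must be treated separately by the contact Darboux theorem) $k(0)=0$ by continuity, and the contact condition becomes $(hf_z-fh_z)|_0\ne 0$. The fact $\alpha|_0=f(0)\,dx+h(0)\,dy$ is precisely what makes $\alpha$ comparable at the origin with the models, so this is not a cosmetic slip; it shows the mechanism of the proof was not identified.

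The genuine gap is that the heart of the argument — bringing $\alpha$ to $dx+Cdy+z\,dy$ or $dy+Cdx+z\,dx$ — is only asserted, and the route you propose (explicit normalization by diffeomorphisms preserving the foliation $\partial/\partial z$ and ``the level structure of $g$'') would fail: for a degenerate germ $g$ that pseudogroup is far too small, and you yourself flag that you cannot verify it is transitive enough. What you are missing is that Theorem \ref{4-dim}(b) makes any explicit normalization unnecessary, because its hypotheses are conditions \emph{at the single point} $0$. Take $\omega_0=d(p_1\pi^{\ast}\alpha)+\pi^{\ast}\sigma$ and $\omega_1=d(p_1\pi^{\ast}\alpha_{\mathrm{mod}})+\pi^{\ast}\sigma$ with the same $\sigma$; both models satisfy $\alpha_{\mathrm{mod}}\wedge\sigma=0$ and are contact, so part (b) applies as soon as (i) $\alpha_{\mathrm{mod}}|_0\wedge\alpha|_0=0$, which holds for the first model with $C=h(0)/f(0)$ if $f(0)\ne 0$ and for the second with $C=f(0)/h(0)$ if $h(0)\ne 0$ (one of these occurs since $\alpha|_0\ne 0$ and $k(0)=0$); and (ii) for $\mathbb K=\mathbb R$ the sign condition $\frac{\alpha_{\mathrm{mod}}\wedge d\alpha_{\mathrm{mod}}}{\alpha\wedge d\alpha}|_0>0$, which can always be arranged because the two models have opposite orientations ($\alpha_{\mathrm{mod}}\wedge d\alpha_{\mathrm{mod}}=\mp\,dx\wedge dy\wedge dz$) and the flip $(x,y,z)\mapsto(x,y,-z)$ preserves $\sigma$ and $\alpha|_0$ while reversing the sign of $\alpha\wedge d\alpha|_0$. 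This pointwise checking also explains where the constant $C$ and the ``or'' between the two normal forms come from, and it works verbatim in the smooth category, where $kg\equiv 0$ does not give $k\equiv 0$ but $k(0)=0$ suffices. Without this step your argument does not close.
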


In this section we find conditions for the determination of the equivalence class of a
smooth or $\mathbb R$-analytic singular symplectic form on $\mathbb R^{4}$ by its pullback to the Martinet
hypersurface only.

We need some notions from commutative algebra (see Appendix 1
of \cite{JZ1}, \cite{E}) to formulate the result in the smooth
category. We recall that a sequence of elements $a_1,\cdots, a_r$
of a proper ideal $I$ of a ring $R$ is called {\it regular} if
$a_1$ is a non-zero-divisor of $R$ and $a_i$ is a non-zero-divisor of
$R/<a_1,\cdots,a_{i-1}>$ for $i=2,\cdots,r$. Here
$<a_1,\cdots,a_i>$ denotes the ideal generated by $a_1,\cdots,
a_i$. The {\it length} of a regular sequence $a_1,\cdots, a_r$ is
$r$.

The {\it depth} of the proper ideal $I$ of the ring $R$ is the
supremum of lengths of regular sequences in $I$. We denote it by
$\text{depth}(I)$. If $I=R$ then we define $\text{depth}(I)=\infty$.

Let $\sigma$ be  a smooth ($\mathbb K$-analytic) closed
$2$-form-germ on $\Sigma_2=\mathbb K^3$ and $\text{rank} (\sigma|_0)=0$. In the
local coordinate system  $(x,y,z)$ on $\Sigma_2$ we have
$\sigma=ady\wedge dz+b dz\wedge dx +c dx\wedge dy$, where $a,b,c$
are smooth ($\mathbb K$-analytic) function-germs on $\Sigma_2$. By
$I(\sigma)$ we denote the ideal of the ring of smooth ($\mathbb
K$-analytic) function-germs on $\Sigma_2$ generated by $a,b,c$
i.e. $I(\sigma)=<a,b,c>$. It is easy to see that $I(\sigma)$ does
not depend on the local coordinate system  on $\Sigma_2$. $\sigma$
satisfies the condition $\alpha\wedge \sigma=0$, where $\alpha$ is
 a contact form-germ on $\mathbb K^3$. It implies that
$I(\sigma)$ is generated by two function-germs.

In the $\mathbb K$-analytic category if $\text{depth} I(\sigma)\ge 2$
then the two generators of $I(\sigma)$ form a regular sequence of
length 2 (see \cite{E}). One can easily check that it implies that
there does not exist  a $\mathbb K$-analytic vector field-germ
on $\Sigma_2$ such that $X\rfloor \sigma=0$ and $X|_0\ne 0$. The
inverse implication is not true in general. Now we can prove the following theorem.

\begin{thm} \label{smooth}
Let $\omega_0$ and $\omega_1$ be germs of smooth or $\mathbb R$-analytic singular
symplectic forms on $\mathbb R^4$ with a common structurally
smooth Martinet hypersurface $\Sigma_2$ at $0$ and
$\text{rank}(\iota^{\ast}\omega_0|_{0})=\text{rank}(\iota^{\ast}\omega_1|_0) =0$.

If $\iota^{\ast}\omega_0=\iota^{\ast}\omega_1=\sigma$
and the two generators of the ideal $I(\sigma)$ form a regular
sequence of length 2 then there exists a smooth or $\mathbb R$-analytic
diffeomorphism-germ $\Psi :(\mathbb R^{4},0)\rightarrow (\mathbb
R^{4},0)$ such that
\[
\Psi ^{\ast}\omega_1 =\omega_0.
\]
\end{thm}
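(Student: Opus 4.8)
The plan is to reduce both germs to the normal form of Theorem \ref{4-dim} and then verify the two hypotheses of part (b) of that theorem. The regular sequence assumption will be used to check both of them; in particular it will force the two canonical orientations to agree automatically, which is why—unlike in Theorem \ref{R-ana}—no separate orientation hypothesis is needed here. Since $\iota^{\ast}\omega_0=\iota^{\ast}\omega_1=\sigma$ and the common Martinet hypersurface is the structurally smooth $\Sigma_2=\{p_1=0\}$, I would first use Theorem \ref{4-dim}(a), applied in a single coordinate system exactly as in the proof of Theorem \ref{tw-deter}, to write for $i=0,1$
\[
\omega_i=d(p_1\pi^{\ast}\alpha_i)+\pi^{\ast}\sigma,
\]
where the $\alpha_i$ are $1$-form-germs on $\Sigma_2=\mathbb R^3$ with $\alpha_i\wedge\sigma=0$ and $\alpha_i\wedge d\alpha_i|_0\ne0$ (here $n=2$, so $\sigma^{n-2}=1$). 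It then suffices to establish the two conditions of Theorem \ref{4-dim}(b): condition (2), $\alpha_0|_0\wedge\alpha_1|_0=0$, and condition (1), that $\alpha_1\wedge d\alpha_1|_0$ and $\alpha_0\wedge d\alpha_0|_0$ are positive multiples of one another.

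For condition (2), write $\sigma=a\,dy\wedge dz+b\,dz\wedge dx+c\,dx\wedge dy$ on $\Sigma_2$, so $I(\sigma)=\langle a,b,c\rangle$ with $a(0)=b(0)=c(0)=0$. A direct computation shows that $\alpha_i\wedge\sigma=0$ is equivalent to the coefficient vector $V_i$ of $\alpha_i$ being pointwise orthogonal to $W=(a,b,c)$, and that $X\rfloor\sigma=0$ holds iff $X$ is pointwise parallel to $W$. Suppose $\alpha_0|_0$ and $\alpha_1|_0$ were linearly independent. Then $V_0,V_1$ are independent near $0$, $W$ is orthogonal to both, so $W=\mu\,(V_0\times V_1)$ for a germ $\mu$ with $\mu(0)=0$; since $V_0\times V_1$ has a component that is a unit, this gives $I(\sigma)=\langle\mu\rangle$, a principal ideal with $\mu(0)=0$. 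Such an ideal cannot contain a regular sequence of length $2$, a contradiction. Hence $\alpha_0|_0\wedge\alpha_1|_0=0$. (This is just the non-existence of a kernel field $X$ with $X|_0\ne0$ recalled before the statement, made explicit.)

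For condition (1), let $\lambda\ne0$ be the scalar with $\alpha_1|_0=\lambda\,\alpha_0|_0$ (from condition (2)) and put $\gamma=\alpha_1-\lambda\,\alpha_0$, so that $\gamma|_0=0$ and $\gamma\wedge\sigma=0$. Expanding gives
\[
\alpha_1\wedge d\alpha_1|_0=\lambda^2\,\alpha_0\wedge d\alpha_0|_0+\lambda\,\alpha_0\wedge d\gamma|_0,
\]
so it is enough to prove $\alpha_0\wedge d\gamma|_0=0$, after which $\alpha_1\wedge d\alpha_1|_0=\lambda^2\,\alpha_0\wedge d\alpha_0|_0$ with $\lambda^2>0$, i.e.\ the orientations coincide. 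To see this I pass to $1$-jets: choosing linear coordinates with $\alpha_0|_0=dz$, the lowest-order parts of $\alpha_0\wedge\sigma=0$ and $\gamma\wedge\sigma=0$ give $dz\wedge\sigma^{(1)}=0$ and $\gamma^{(1)}\wedge\sigma^{(1)}=0$, whence $\sigma^{(1)}=dz\wedge(b_1\,dx+b_2\,dy)$ with $b_1,b_2$ linear. The regular sequence hypothesis forces $b_1,b_2$ to be independent (a length-$2$ regular sequence needs $\mathrm{height}\,I(\sigma)\ge2$, which fails if the linear parts $b_1,b_2$ are dependent), hence a regular sequence; writing $\gamma^{(1)}=g_1\,dx+g_2\,dy+g_3\,dz$, the relation $\gamma^{(1)}\wedge\sigma^{(1)}=0$ reads $g_2b_1=g_1b_2$, and the Koszul syzygy yields $g_1=\kappa b_1$, $g_2=\kappa b_2$ for a constant $\kappa$. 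Finally $d\sigma^{(1)}=0$ (from $d\sigma=0$) gives $\partial_yb_1=\partial_xb_2$, and a direct computation gives $\alpha_0\wedge d\gamma|_0=\kappa(\partial_xb_2-\partial_yb_1)\,dx\wedge dy\wedge dz=0$.

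With conditions (1) and (2) in hand, Theorem \ref{4-dim}(b)—valid in the smooth and $\mathbb R$-analytic categories—produces the required diffeomorphism-germ $\Psi$ with $\Psi^{\ast}\omega_1=\omega_0$. I expect the main obstacle to be condition (1): the whole point of the theorem, namely dropping the orientation hypothesis of Theorem \ref{R-ana}, rests entirely on showing that the regular sequence assumption makes the two canonical orientations automatically equal, which is exactly the $1$-jet and Koszul computation above. Condition (2) is comparatively routine once the reformulation in terms of $W$ and the principal-ideal obstruction are set up.
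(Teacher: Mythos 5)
Your reduction to Theorem \ref{4-dim} and your verification of condition (\ref{lineincot}) are sound (the latter by a principal-ideal contrapositive that differs from the paper's route but works). The gap is exactly where you predicted the main difficulty lies: condition (\ref{orientation1}). Your argument rests on the claim that the regular-sequence hypothesis forces the linear parts $b_1,b_2$ of the generators to be linearly independent, and that claim is false: a length-two regular sequence need not have independent (or even nonzero) linear parts. Concretely, in Darboux coordinates with $\alpha_0=dz+x\,dy$ take
\[
\sigma=(z-2xy)\,dy\wedge dz+y^{2}\,dz\wedge dx-xy^{2}\,dx\wedge dy .
\]
Then $d\sigma=0$, $\alpha_0\wedge\sigma=0$, $\sigma|_0=0$, and the two generators $a=z-2xy$, $b=y^{2}$ of $I(\sigma)$ form a regular sequence in both the smooth and the $\mathbb R$-analytic categories (modulo $z-2xy$ the ring is the ring of germs in $(x,y)$, where $y^{2}$ is a non-zero-divisor), yet $j_0^1 b=0$. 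Here $\sigma^{(1)}=z\,dy\wedge dz$, so your $1$-jet relation $g_2b_1=g_1b_2$ reads $g_1z=0$: it forces $g_1=0$ but leaves $g_2$ entirely unconstrained, and both your Koszul step ($g_i=\kappa b_i$ with $\kappa$ constant) and the final cancellation $\kappa(\partial_xb_2-\partial_yb_1)=0$ collapse. Passing to $1$-jets discards precisely the information that the hypothesis supplies.

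The paper runs the same Koszul argument, but at the level of function-germs, where the regular-sequence hypothesis genuinely applies. With $\alpha_0=dz+x\,dy$ one gets $c=-xb$, hence $I(\sigma)=\langle a,b\rangle$; writing $\alpha_1=f\,dx+g\,dy+h\,dz$, the relation $\alpha_1\wedge\sigma=0$ becomes $af+b(g-xh)=0$, and regularity of $(a,b)$ gives the germ-level identities $f=rb$, $g-xh=-ra$ for a single function-germ $r$. Then $\alpha_1|_0=h(0)\,dz$ (condition (\ref{lineincot}) for free), and a direct computation yields $\alpha_1\wedge d\alpha_1|_0=h(0)\bigl(h(0)-r(0)(\frac{\partial a}{\partial x}(0)+\frac{\partial b}{\partial y}(0))\bigr)\,dx\wedge dy\wedge dz$; closedness of $\sigma$ gives $\frac{\partial a}{\partial x}(0)+\frac{\partial b}{\partial y}(0)=0$, so $\alpha_1\wedge d\alpha_1|_0=h(0)^{2}\,dx\wedge dy\wedge dz$, a positive multiple of $\alpha_0\wedge d\alpha_0|_0$. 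In your notation this is exactly the desired statement $\alpha_0\wedge d\gamma|_0=0$, with your constant $\kappa$ realized as $r(0)$; but it cannot be extracted from $1$-jet data alone, so repairing your proof means redoing the syzygy with germ coefficients, which is essentially the paper's argument.
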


\begin{proof}
By Theorem \ref{4-dim} (a) we obtain
$\omega_0=d(p_1\pi^{\ast}\alpha_0)+\sigma$ and
$\omega_1=d(p_1\pi^{\ast}\alpha_1)+\sigma$, where $\alpha_0, \
\alpha_1$ are germs of smooth contact forms on
$\Sigma_2=\{p_1=0\}$ such that
$\alpha_0\wedge\sigma=\alpha_1\wedge\sigma=0$.

$\alpha_0$ is a contact form therefore we can
find a coordinate system  $(x,y,z)$ on $\Sigma_2$  such that
$\alpha_0=dz+xdy$. Let
$\sigma=ady\wedge dz + b dz\wedge dx + c dx\wedge dy$, where $a$,
$b$, $c$ are function-germs on $\Sigma_2$ vanishing at $0$.
From $\alpha_0\wedge \sigma=0$ we get $c=-xb$. Thus $I(\sigma)=<a,b,c>=<a,b>$.
The $2$-form germ $\sigma$ is closed. It implies that $\frac{\partial a}{\partial x}+\frac{\partial b}{\partial y}-x\frac{\partial b}{\partial z}=0$. Thus we have
\begin{equation}\label{closed}
\frac{\partial a}{\partial x}(0)+\frac{\partial b}{\partial y}(0)=0
\end{equation}
Let $\alpha_1=f dx+g dy +h dz$, where $f, g, h$ are
functions-germs on $\Sigma_2$. From $\alpha_1\wedge \sigma=0$ we
obtain the equation
\begin{equation}
\label{s-rown}
af+b(g-xh)=0
\end{equation}
and $a(0)=b(0)=0$.

By assumptions $a,b$ is a regular sequence.

Therefore $f=rb$ and
$g-xh=-ra$, where $r$ is a smooth function-germ
on $\Sigma_2$ at $0$.

Thus $1$-form germ $\alpha_1$ has the following form
\begin{equation}\label{alfa}
\alpha_1=rb dx +(xh-ra) dy +hdz.
\end{equation}
 Thus $\alpha_1|_0=h(0)dz$ since $a(0)=b(0)=0$ and $h(0)\ne 0$, because $\alpha_1$ is a contact form-germ. It implies that
\begin{equation}\label{kernels}
\ker \alpha_0|_0=\ker \alpha_1|_0.
\end{equation}
By (\ref{alfa}) we get $$\alpha_1\wedge d\alpha_1|_0=\left((h(0))^2-h(0)r(0)\left(\frac{\partial a}{\partial x}(0)+\frac{\partial b}{\partial y}(0)\right)\right)dx\wedge dy\wedge dz.$$

By (\ref{closed}) we obtain that
$$\alpha_1\wedge d\alpha_1|_0=(h(0))^2 dx\wedge dy\wedge dz, \ \ \alpha_0\wedge d\alpha_0|_0=dx\wedge dy \wedge dz.$$ Since $h(0)\ne 0$ both $3$-forms define the same orientation of $\Sigma_2$.
Therefore from (\ref{kernels}) we finish the proof by Theorem \ref{4-dim} (b).
\end{proof}

\begin{ex}\label{orientation}
Let $\omega$ be a closed $2$-form-germ on $\mathbb R^4$ in coordinates $(p_1,x,y,z)$ of the following form $d(p_1\alpha)+\sigma$, where
$$\alpha=dz+xdy,  \  \ \sigma=x(dz+xdy)\wedge (a(x,y,z) dx-b(x)dy),$$
 $a(x,y,z)=a_1x+a_2y+a_3z$ and  $b(x)=\frac{a_3}{3}x^2-\frac{a_2}{2}x$.

 It is easy to check that, $d\omega=0$,  $\Sigma_2(\omega)=\{p_1=0\}$, $\alpha$ is contact form-germ on $\{p_1=0\}$, $\omega_{T\Sigma_2}=\sigma$  and $\alpha\wedge \sigma=0$.

 Let $\omega_1$ be a closed $2$-form-germ on $\mathbb R^4$ of the following form
 $$d(p_1(h(x,y,z)\alpha+r(x,y,z)(a(x,y,z)dx-b(x)dy)))+\sigma,$$
 where $h$ and $r$ are $\mathbb R$-analytic function-germs on $\{p_1=0\}$ and $h(0)r(0)\ne 0$. It is easy to check that
 $d\omega_1=0$,  $\Sigma_2(\omega_1)=\{p_1=0\}$,  $\omega_1|_{T\Sigma_2}=\sigma$  and
 $$\left(h(x,y,z)\alpha+r(x,y,z)(a(x,y,z)dx-b(x)dy)\right)\wedge \sigma=0.$$
 The $1$-form-germ $h(x,y,z)\alpha+r(x,y,z)(a(x,y,z)dx-b(x)dy)$ is a contact form-germ on $\{p_1=0\}$ iff   $ h(0)(h(0)-1/2a_2r(0))\ne 0$.

 Thus $\omega$ and $\omega_1$  are two singular symplectic form-germs with the same restriction $\sigma$ to the common Martinet hypersurface $\{p_1=0\}$. But the canonical orientations of the Martinet hypersurface defined by $\omega$ and $\omega_1$ are different if $ h(0)(h(0)-1/2a_2r(0))<0$.
 \end{ex}

\section{The complete set of invariants for singular symplectic
forms with singular Martinet hypersurfaces.}\label{singular}

In this section we consider singular symplectic forms with singular Martinet hypersurfaces. For any smooth ( $\mathbb K$-analytic) function $f$ on $\mathbb K^{2n}$ there exists closed $2$-form $\omega$ such that $\Sigma_2(\omega)$ is $f^{-1}(0)$. Such singular symplectic form can be constructed in the following way   (see \cite{Geometry})
$$\omega=d(\frac{1}{n!}\int_0^{x_1}f(t,x_2,\cdots,x_{2n})dt dx_2+\sum_{i=2}^{n}x_{2i-1}dx_{2i}),$$
where  $(x_1,\cdots,x_{2n})$ is the coordinate system  on $\mathbb K^{2n}$. Then $\omega^n=f(x)dx_1\cdots \wedge dx_{2n}$.

We assume that the Martinet hypersurface  is a quasi-homogeneous hypersurface with an isolated singularity.
Under these assumptions we can prove that the equivalence class of a singular symplectic form is determine by its restriction to the regular part of the singular Martinet hypersurface and its canonical orientation.

First we recall the notion of quasi-homogeneity and its properties.

\begin{defn}
\label{def-qh} The germ at $0$ of a set $N\subset \mathbb K^m$ is
called quasi-homogeneous if there exist a local coordinate system 
$(x_1,\ldots,x_m)$ and positive integers $\lambda _1,\ldots,\lambda _m$
such that the following holds: if a point with coordinates
$(x_1,\cdots,x_m)$ belongs to $N$ then for any $t\in [0;1]$ the point with
coordinates $(t^{\lambda_1}x_1,\cdots,t^{\lambda_m}x_m)$  also belongs to $N$.

A function-germ $f$ at $0$ on $\mathbb K^m$ is quasi-homogeneous if there exist a local coordinate system 
$(x_1,\ldots,x_m)$ and positive integers $\lambda _1,\ldots,\lambda _m, \delta$ such that $ f(t^{\lambda_1}x_1,\cdots,t^{\lambda_m}x_m)=t^{\delta}f(x_1,\ldots,x_m)$ for any $t\in [0;1]$ and any $(x_1,\ldots,x_m)$.
\end{defn}

It is obvious that if a function-germ $f$ on $\mathbb K^m$ is quasi-homogeneous then $f^{-1}(0)$ is a quasi-homogeneous subset-germ of $\mathbb K^m$.
The following property of quasi-homogeneous subset-germs is crucial for our study.

\begin{thm}[\cite{Re} in $\mathbb C$-analytic category, \cite{DJZ1} in $\mathbb R$-analytic and smooth categories]
\label{Poincare}
If $N$ is a quasi-homogeneous subset-germ of $\mathbb K^m$ then any closed $k$-form-germ vanishing at every point of $N$ is a differential of a $(k-1)$-form-germ vanishing at every point of $N$.
\end{thm}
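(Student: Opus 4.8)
The plan is to construct an explicit homotopy operator adapted to the quasi-homogeneous structure of $N$ and then to check that this operator preserves the class of forms vanishing on $N$. First I would fix quasi-homogeneous coordinates $(x_1,\dots,x_m)$ with weights $\lambda_1,\dots,\lambda_m>0$ as in Definition \ref{def-qh}, and introduce the quasi-homogeneous Euler vector field $E=\sum_{i=1}^m \lambda_i x_i \frac{\partial}{\partial x_i}$ together with its flow $\phi_t(x_1,\dots,x_m)=(t^{\lambda_1}x_1,\dots,t^{\lambda_m}x_m)$ for $t\in[0,1]$. This flow is a homotopy from the constant map $\phi_0\equiv 0$ to the identity $\phi_1=\mathrm{id}$, and the defining property of quasi-homogeneity is exactly that $\phi_t(N)\subseteq N$ for every $t\in[0,1]$, which is what will make the whole argument work.

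Next I would derive the homotopy formula. Assuming $k\ge 1$ (for $k=0$ the statement is vacuous), for a closed $k$-form-germ $\omega$ Cartan's formula gives $\mathcal L_E\omega=d(E\rfloor\omega)+E\rfloor d\omega=d(E\rfloor\omega)$, hence $\frac{d}{dt}\phi_t^{*}\omega=\frac1t\,\phi_t^{*}\mathcal L_E\omega=d\!\left(\frac1t\,\phi_t^{*}(E\rfloor\omega)\right)$. Integrating over $t\in[0,1]$ and using $\phi_1^{*}\omega=\omega$ and $\phi_0^{*}\omega=0$ yields $\omega=d(K\omega)$ with $K\omega=\int_0^1 \frac1t\,\phi_t^{*}(E\rfloor\omega)\,dt$. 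A short direct computation shows that $\phi_t^{*}(E\rfloor\omega)$ carries an overall factor $t^{\mu}$ with $\mu\ge 1$, arising from the weights $\lambda_i\ge 1$ acting on the $dx_i$ together with the evaluation of coefficients at $\phi_t(x)$, so the factor $1/t$ is absorbed and the integral converges, defining a smooth ($\mathbb K$-analytic) $(k-1)$-form-germ $K\omega$.

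The \emph{key step}, and the only place the hypothesis on $N$ enters, is to verify that $K$ preserves vanishing on $N$. Suppose $\omega|_p=0$ for every $p\in N$. Contraction with $E$ preserves this, so $(E\rfloor\omega)|_p=0$ for every $p\in N$ as well. For $p\in N$ and $t\in[0,1]$ we have $\phi_t(p)\in N$, and therefore $\bigl(\phi_t^{*}(E\rfloor\omega)\bigr)\big|_p=(d\phi_t|_p)^{*}\bigl((E\rfloor\omega)|_{\phi_t(p)}\bigr)=0$. Since the integrand vanishes at $p$ for every $t$, we conclude $(K\omega)|_p=0$. Thus $\eta=K\omega$ is a $(k-1)$-form-germ vanishing at every point of $N$ and satisfying $d\eta=\omega$, which is precisely the assertion.

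The main obstacle I anticipate is organizing the convergence and regularity of $K\omega$ at $t=0$ cleanly and simultaneously in all three categories: one must see that the positive weights force each coefficient of $\phi_t^{*}(E\rfloor\omega)$ to vanish to high enough order in $t$ that $\frac1t\,\phi_t^{*}(E\rfloor\omega)$ extends to an integrable family that is smooth (respectively $\mathbb K$-analytic) in $x$. Everything else is a formal consequence of the inclusion $\phi_t(N)\subseteq N$, which is the hypothesis doing the real work.
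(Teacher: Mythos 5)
The paper itself contains no proof of this theorem: it is imported verbatim from \cite{Re} (for the $\mathbb C$-analytic category) and \cite{DJZ1} (for the $\mathbb R$-analytic and smooth categories), so the only comparison is with those cited sources. Your argument is correct and is essentially the standard proof from \cite{DJZ1}: the homotopy operator $K\omega=\int_0^1 t^{-1}\,\phi_t^{\ast}(E\rfloor\omega)\,dt$ built from the weighted dilations $\phi_t(x)=(t^{\lambda_1}x_1,\dots,t^{\lambda_m}x_m)$, where positivity of the integer weights makes each coefficient of $\phi_t^{\ast}(E\rfloor\omega)$ carry a factor $t^{\mu}$ with $\mu\ge 1$ (so the integrand extends smoothly, respectively analytically, to $t=0$), and the defining inclusion $\phi_t(N)\subseteq N$ forces $K\omega$ to vanish at every point of $N$.
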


To prove our result we also need the following division property.
\begin{defn}
\label{def-divis}
A differential $1$-form-germ $\alpha$ on $\mathbb K^m$ has $k$-division property if for any differential $k$-form-germ $\beta$ such that $\alpha\wedge \beta=0$ there exists  a differential $(k-1)$-form-germ $\gamma$ such that $\beta=\alpha\wedge \gamma$.
\end{defn}

Let $\mathcal O$ denotes the ring of $\mathbb K$-analytic or smooth function-germs at $0$ and let $f\in \mathcal O$. We recall the definition of an isolated singularity.

\begin{defn}
A singular hypersurface-germ  $\{f=0\}$ has an isolated singularity at $0$ if
$$
\dim_{\mathbb K} \frac{\mathcal O}{<\frac{\partial f}{\partial x_1}, \cdots, \frac{\partial f}{\partial x_m}>}<\infty.
$$
\end{defn}

The differential of a function-germ with an isolated singularity has the division property.

\begin{thm}[\cite{Moussu}]
\label{divis}
If $\{f=0\}$ has an isolated singularity at $0$ then $df$ has $k$-division property for $k=1,\cdots,m-1$.
\end{thm}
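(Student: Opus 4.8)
The plan is to recognize the $k$-division property as the exactness of a Koszul-type cochain complex built from $df$. Write $a_i=\partial f/\partial x_i$ and $\eta=df=\sum_{i=1}^m a_i\,dx_i$, and let $\Omega^k$ denote the $\mathcal O$-module of $k$-form-germs at $0$, a free module with basis $dx_{i_1}\wedge\cdots\wedge dx_{i_k}$. Since $df\wedge df=0$, the wedge map $L=df\wedge\cdot:\Omega^{k}\to\Omega^{k+1}$ satisfies $L\circ L=0$, giving a complex
\[
0\to\Omega^0\xrightarrow{L}\Omega^1\xrightarrow{L}\cdots\xrightarrow{L}\Omega^m\to0,
\]
and the $k$-division property is exactly the assertion that this complex is exact at $\Omega^k$, i.e. $\ker\big(L|_{\Omega^k}\big)=L(\Omega^{k-1})$. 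Under the chosen basis this is precisely the cochain Koszul complex of the sequence $(a_1,\dots,a_m)$; its cohomology in the top degree $m$ is the Milnor algebra $\mathcal O/\langle a_1,\dots,a_m\rangle$, which is nonzero, and this is exactly why the claim is confined to the range $1\le k\le m-1$.

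The second step is to turn the isolated-singularity hypothesis into a regular-sequence statement. In the $\mathbb K$-analytic category $\mathcal O$ is a regular, hence Cohen--Macaulay, Noetherian local ring of dimension $m$, and finiteness of $\dim_{\mathbb K}\mathcal O/\langle a_1,\dots,a_m\rangle$ forces the Jacobian ideal to be $\mathfrak m$-primary. Thus $a_1,\dots,a_m$ is a system of parameters, and in a Cohen--Macaulay ring every system of parameters is a regular sequence. For a regular sequence of length $m=\dim\mathcal O$ the Koszul complex is acyclic in positive degrees, and by the self-duality of the Koszul complex the cochain version $(\Omega^\bullet,\,df\wedge\cdot)$ has vanishing cohomology in every degree $1\le k\le m-1$. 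This establishes the $k$-division property for $df$ in the analytic setting for each such $k$.

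The main obstacle is the smooth category, where $\mathcal O=C^\infty$-germs is not Noetherian and the Koszul/Cohen--Macaulay machinery does not apply directly. Here I would exploit that an isolated singularity is finitely determined (Mather--Tougeron): $f$ is right-equivalent to a polynomial, and after a coordinate change one may arrange a quasi-homogeneous representative. This both reduces the algebraic content to the analytic model and makes the quasi-homogeneous Poincar\'e-type Lemma (Theorem \ref{Poincare}) available, allowing passage from ``$\beta$ is closed under $df\wedge\cdot$'' to ``$\beta$ is exact under $df\wedge\cdot$''. The delicate point is transferring exactness back to genuine smooth germs rather than jets or formal series; I expect to handle this with a Malgrange--Tougeron-type smooth division theorem combined with the finite-determinacy estimate, which is in essence the argument carried out by Moussu in \cite{Moussu}.
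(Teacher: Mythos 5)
The paper does not actually prove this statement: Theorem \ref{divis} is imported verbatim from Moussu's paper \cite{Moussu}, so there is no internal proof to compare against, and your proposal has to be judged on its own merits. Your treatment of the $\mathbb K$-analytic category is correct and is the standard de Rham--Saito division argument: the isolated-singularity hypothesis makes $\partial f/\partial x_1,\dots,\partial f/\partial x_m$ a system of parameters in the regular (hence Cohen--Macaulay) local ring $\mathcal O$, so they form a regular sequence, the Koszul complex is acyclic in positive degrees, and the identification of $\bigl(\Omega^\bullet, df\wedge\cdot\bigr)$ with the Koszul complex read backwards gives exactness at $\Omega^k$ for $1\le k\le m-1$, with the Milnor algebra as the only obstruction, sitting in top degree.

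The smooth category, however, is where the real content of Moussu's theorem lies, and there your sketch has two genuine defects. First, finite determinacy gives that $f$ is right-equivalent to a \emph{polynomial}, but not to a \emph{quasi-homogeneous} one: by Saito's theorem \cite{Sa}, an isolated singularity admits quasi-homogeneous coordinates if and only if $f$ belongs to its own Jacobian ideal, and this fails in general (e.g. for $f=x^5+y^5+x^3y^3$, where the Milnor and Tjurina numbers differ). Second, even when $\{f=0\}$ is quasi-homogeneous, the paper's Poincar\'e-type lemma is about the exterior derivative $d$ --- closed forms vanishing on $N$ are differentials of forms vanishing on $N$ --- and says nothing about the operator $\beta\mapsto df\wedge\beta$; it cannot convert ``$df\wedge\beta=0$'' into ``$\beta=df\wedge\gamma$'', so invoking it here is a category error. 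What would actually close the gap, after using finite determinacy to replace $f$ by a polynomial: Taylor-expand $\beta$ and solve the division problem formally (the partials remain a regular sequence in $\mathbb R[[x_1,\dots,x_m]]$, so the Koszul argument applies verbatim); observe that at points near $0$ other than the origin $df$ does not vanish, so division is elementary there; and then apply Malgrange's theorem on linear systems with analytic coefficients and smooth right-hand sides (faithful flatness of smooth germs over analytic germs) to produce a genuinely smooth $\gamma$. You name this last tool, but since the entire difficulty of the smooth case is delegated to it without being carried out, the proposal as written proves only the analytic half of the statement.
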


Now we are ready to prove the main result of this section.

\begin{thm}\label{sing}
Let $\omega_0$ and $\omega_1$ be germs of  smooth ($\mathbb
K$-analytic) singular symplectic forms on $\mathbb K^{2n}$ with a
common singular Martinet hypersurface $\Sigma_2$ at $0$. Let
$\Sigma_2$ be a quasi-homogeneous hypersurface-germ with
an isolated singularity at $0$.

If $\omega_0$ and $\omega_1$ have the same restriction to the
regular part of $\Sigma_2$ and $\omega_0$, $\omega_1$ define the
same canonical orientation of the regular part of $\Sigma_2$  then
there exists  a smooth ($\mathbb K$-analytic)
diffeomorphism-germ $\Psi :(\mathbb K^{2n},0)\rightarrow (\mathbb
K^{2n},0)$ such that
\[
\Psi ^{\ast }\omega_1 =\omega_0.
\]
\end{thm}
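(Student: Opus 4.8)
The plan is to run the Moser homotopy method, exactly as in the proof of Proposition~\ref{rel-Darboux}, but now carried across the singular point of $\Sigma_2$: the ordinary relative Poincar\'e lemma is replaced by its quasi-homogeneous counterpart (Theorem~\ref{Poincare}), and the divisibility of the objects produced is controlled by the isolated-singularity division property (Theorem~\ref{divis}). Write $\Sigma_2=\{f=0\}$ with $f$ a quasi-homogeneous defining function, so that $\omega_i^{\,n}=f_i\,\Omega$ with $\{f_i=0\}=\Sigma_2$, and set $\eta=\omega_1-\omega_0$ and $\omega_t=\omega_0+t\eta$ for $t\in[0;1]$. The goal is a family $\Phi_t$ with $\Phi_t^{\ast}\omega_t=\omega_0$, $\Phi_0=\mathrm{Id}$, whose time-one map is $\Psi$.

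First I would establish that the closed $2$-form-germ $\eta$ vanishes at every point of $\Sigma_2$ (as a full $2$-form, not merely after pullback). On the regular part $\Sigma_2\setminus\{0\}$ the hypersurface is structurally smooth, so Theorem~\ref{4-dim} applies locally and each $\omega_i$ takes the form $d(p_1\pi^{\ast}\alpha_i)+\pi^{\ast}\sigma$; the coincidence of the restrictions to the regular part fixes the common tangential part $\sigma$, while the coincidence of the canonical orientations supplies the sign condition of Theorem~\ref{4-dim}(\ref{orientation1}). The division property of Theorem~\ref{divis} is what allows the resulting normalization of the transversal data $\alpha_i$ to be carried smoothly \emph{across} the singular point $0$, so that after a diffeomorphism fixing $\Sigma_2$ the two forms agree at every point of $\Sigma_2$ and $\eta$ vanishes there. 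Since $\Sigma_2$ is quasi-homogeneous and $\eta$ is closed and vanishes on it, Theorem~\ref{Poincare} produces a $1$-form-germ $\mu$, also vanishing at every point of $\Sigma_2$, with $\eta=d\mu$.

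Differentiating $\Phi_t^{\ast}\omega_t=\omega_0$ and using Cartan's formula together with $d\omega_t=0$ reduces the homotopy to $d(V_t\rfloor\omega_t)=-\eta=-d\mu$, for which it suffices to solve $V_t\rfloor\omega_t=-\mu$. Because $\Sigma_2$ is nowhere dense, this is equivalent, exactly as in the passage from (\ref{linear}) to (\ref{2l}), to the $2n$-form equation $V_t\rfloor\omega_t^{\,n}=-\,n\,\mu\wedge\omega_t^{\,n-1}$. The canonical-orientation hypothesis plays here the role that $A>0$ played in Proposition~\ref{rel-Darboux}: it guarantees that along the whole path $\omega_t^{\,n}=f\,g_t\,\Omega$ with $g_t(0)\neq0$, so that $\Sigma_2(\omega_t)=\Sigma_2$ for every $t$ and no new degeneracies appear near $0$. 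Thus $V_t$ is forced to satisfy $V_t\rfloor\Omega=-\dfrac{n}{f\,g_t}\,\mu\wedge\omega_t^{\,n-1}$.

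The main obstacle is the smoothness of $V_t$ across the singular point $0$, i.e. the divisibility of the $(2n-1)$-form $\mu\wedge\omega_t^{\,n-1}$ by $f$. This is precisely where the isolated singularity is decisive: since $\mu$ vanishes at every point of $\Sigma_2=\{f=0\}$, every coefficient of $\mu\wedge\omega_t^{\,n-1}$ vanishes on $\{f=0\}$, and reducedness of $f$ (a consequence of the isolated singularity), reinforced by the division property of Theorem~\ref{divis}, yields the quotient as a smooth or $\mathbb{K}$-analytic germ. The resulting $V_t$ then vanishes on $\Sigma_2$, so its flow $\Phi_t$ is defined near $0$, fixes $\Sigma_2$ pointwise, and satisfies $\Phi_t^{\ast}\omega_t=\omega_0$; taking $\Psi=\Phi_1$ gives $\Psi^{\ast}\omega_1=\omega_0$. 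In the $\mathbb{R}$-analytic and smooth categories the sign encoded in the canonical-orientation assumption is exactly what keeps $g_t(0)\neq0$ throughout the homotopy, whereas in the $\mathbb{C}$-analytic category this positivity is automatic up to the technical normalization of Remark~\ref{C-orient}.
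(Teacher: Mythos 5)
Your overall skeleton (produce a primitive of $\eta=\omega_1-\omega_0$ vanishing on $\Sigma_2$, then run Moser and divide by $f$) matches the paper's, but the two steps that carry all the difficulty are missing, and the first one as you state it is the wrong reduction. You propose to first arrange, by a diffeomorphism fixing $\Sigma_2$, that $\eta$ vanishes at every point of $\Sigma_2$ as a full $2$-form. The hypothesis only gives $\iota^{\ast}\eta=0$ on the regular part, i.e. vanishing on pairs of vectors \emph{tangent} to $\Sigma_2$; pointwise vanishing on $T_{\Sigma_2}\mathbb{K}^{2n}$ is strictly stronger (it is precisely the hypothesis of Proposition \ref{rel-Darboux}), and your justification --- applying Theorem \ref{4-dim} locally on the regular part and ``carrying the normalization across the singular point'' via Theorem \ref{divis} --- is not an argument: Theorem \ref{4-dim} is a statement at a structurally smooth point with the rank hypothesis $\text{rank}(\iota^{\ast}\omega|_0)\le 2n-4$ (at generic regular points of $\Sigma_2$ the rank is $2n-2$, so it does not even apply there); the local normal forms $d(p_1\pi^{\ast}\alpha_i)+\pi^{\ast}\sigma$ of the two forms still differ pointwise on $\Sigma_2$, since their restrictions are $dp_1\wedge\pi^{\ast}\alpha_i+\pi^{\ast}\sigma$; and no mechanism is offered for gluing pointwise normalizations into a single diffeomorphism-germ at the singular point. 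In fact the paper never establishes pointwise vanishing of $\eta$ on $\Sigma_2$: its primitive is $f\alpha$, so $\eta=d(f\alpha)$ equals $df\wedge\alpha$ at points of $\Sigma_2$, which is nonzero in general. The actual mechanism is algebraic: the hypothesis gives that $df\wedge\eta$ vanishes on $\left\{f=0\right\}$, hence $df\wedge\eta=f\beta$; Moussu's division property of $df$ (Theorem \ref{divis} --- this is exactly where the isolated singularity enters), applied twice, gives $\eta=f\gamma+df\wedge\delta$; then closedness of $f(\gamma-d\delta)$ together with the quasi-homogeneous relative Poincar\'e lemma (Theorem \ref{Poincare}) yields $\eta=d(f\alpha)$.

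Second, in the Moser step your assertion that ``the resulting $V_t$ vanishes on $\Sigma_2$'' does not follow and is not what is needed. After dividing by $f$ the equation becomes $V_t\rfloor g_t\Omega=n\alpha\wedge\omega_t^{n-1}$, and the quotient $\alpha$ of your $\mu$ by $f$ has no reason to vanish on $\Sigma_2$; what the existence of the flow requires is $V_t|_0=0$, i.e. the vanishing at $0$ of the right-hand side $\alpha\wedge\omega_t^{n-1}$. This is genuinely nontrivial (note that $\omega_t^{n-1}|_0\ne 0$ in general, and $\alpha|_0$ need not vanish), and the paper proves it by introducing the Euler vector field $E$ of the quasi-homogeneous $f$ (so $E\rfloor df=f$ and $E|_0=0$), deducing from $df\wedge\alpha\wedge\omega_0^{n-1}=hf\Omega$ that $\alpha\wedge\omega_0^{n-1}=hE\rfloor\Omega+df\wedge\theta$ via the $(2n-1)$-division property, whence $\alpha\wedge\omega_t^{n-1}|_0=0$ because $E|_0=0$ and $df|_0=0$. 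This second, decisive use of quasi-homogeneity (beyond Theorem \ref{Poincare}) is entirely absent from your proposal, and without it the Moser argument does not close.
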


\begin{proof} We may find
a coordinate system  such that $\omega_0^n=f\Omega$, where $f$ is a
quasi-homogeneous function-germ with an isolated singularity at $0$
and $\Omega$ is  a volume form-germ on $\mathbb K^{2n}$ . Thus $\omega_1^n=gf\Omega$, where $g$ is a
function-germ, such that $g(0)>0$, because $\Sigma_2=\Sigma_2(\omega_0)=\Sigma_2(\omega_1)$, $\omega_0$ and
$\omega_1$ define the same orientation of the regular part of $\Sigma_2$. The singular symplectic form-germs $\omega_0$
and $\omega_1$ have the same restriction to the regular part of
$\Sigma_2$. Thus there exists  a $3$-form-germ $\beta$ such
that
\begin{equation}\label{restr}
df\wedge(\omega_1-\omega_0)=f\beta.
\end{equation}
Multiplying both sides of the above formula by $df\wedge$ we
obtain $ fdf\wedge\beta=0. $ But $\Sigma_2$ is nowhere dense thus
this implies that $ df\wedge\beta=0$. The hypersurface-germ $\{f=0\}$ has an isolated
singularity at $0$, therefore by Theorem \ref{divis} $df$ has $k$-division property  for $k=1,\cdots, 2n-1$. Thus we obtain $
\beta=df\wedge\gamma, $ where $\gamma$ is a $2$-form-germ.
From the above formula and (\ref{restr}) we obtain $
df\wedge(\omega_1-\omega_0-f\gamma)=0. $ By $2$-division property
of $df$  we get that
\begin{equation}\label{algrest}
\omega_1-\omega_0=f\gamma+df\wedge \delta,
\end{equation}
where $\delta$ is a $1$ form-germ.

The $2$-form-germ $\omega_1-\omega_0=f(\gamma- d\delta)+d(f\delta)$ is closed. It implies that the $2$-form $f(\gamma- d\delta)$ is closed too and it vanishes at every point of $\Sigma_2=\{f=0\}$.
Since $\Sigma_2$ is quasi-homogeneous by Theorem \ref{Poincare} we obtain that there exists a $1$ form-germ
$\alpha$ such that
\begin{equation}\label{PLP}
\omega_1-\omega_0=d(f\alpha)
\end{equation}
Now we use Moser's homotopy  method (\cite{Moser}). Let
$$
\omega_t=\omega_0+t(\omega_1-\omega_0)=\omega_0+td(f\alpha),
$$
for $t\in [0;1]$. We look for germs of diffeomorphisms $\Phi_t$
such that
\begin{equation}
\label{singhom} \Phi_t^{\ast}\omega_t=\omega_0,\  \text{for} \ t \in
[0;1], \ \Phi_0=Id.
\end{equation}
 Differentiating the above homotopy equation
by $t$, we obtain
\[
d(V_t\rfloor\omega_t)=d(f\alpha),
\]
where $V_t=\frac{d}{dt}{\Phi_t}$. Therefore we have to solve the
following equation
\begin{equation}\label{singrl}
V_t\rfloor\omega_t=f\alpha.
\end{equation}
First we calculate $\Sigma_2(\omega_t)$. It is easy to see that
$$
\omega_1^n=(\omega_0+d(f\alpha))^n=\omega_0^n+n(fd\alpha+df\wedge\alpha)\wedge
\omega_0^{n-1}+f\kappa,
$$
where $\kappa$ is a $2n$-form-germ such that $\kappa|_0=0$
(because $df|_0=0$). But
$\Sigma_2(\omega_0)=\Sigma_2(\omega_1)=\{f=0\}$. Thus if we
restrict both sides of the above formula to $\{f=0\}$ we obtain
that $df\wedge\alpha\wedge \omega_0^{n-1}|_{\{f=0\}}=0$. Hence
there exists a function-germ $h$ such that
\begin{equation}\label{h}
 df\wedge\alpha\wedge
\omega_0^{n-1}=hf\Omega.
\end{equation}
But $\omega_1^n=gf\Omega$. Thus we obtain that
\begin{equation}\label{g}
g(0)=1+n\left(\frac{d\alpha\wedge
\omega_0^{n-1}}{\Omega}|_0+h(0)\right).
\end{equation}

No we calculate
$$
\omega_t^n=(\omega_0+td(f\alpha))^n=\omega_0^n+n(fd\alpha+df\wedge\alpha)\wedge
\omega_0^{n-1}t+f\kappa_t=fg_t\Omega,
$$
where $\kappa_t$ is a $2n$-form-germ such that
$\kappa_t|_0=0$ for $t\in [0;1]$ and $g_t$ is a function-germ.
Thus
$$
g_t(0)=1+tn\left(\frac{d\alpha\wedge
\omega_0^{n-1}}{\Omega}|_0+h(0)\right).
$$
From (\ref{g}) we obtain that $g_t(0)=1+t(g(0)-1)$. But $g(0)>0$,
therefore $g_t(0)>0$ for $t\in[0;1]$. Thus
$\Sigma_2(\omega_t)=\{f=0\}$ and $\omega_t$ define the same
orientation of $\Sigma_2$ for any $t$.

Because $\{f=0\}$ is nowhere dense, equation (\ref{singrl}) is
equivalent to
\[
V_t\rfloor\omega_t^n=nf\alpha\wedge\omega_t^{n-1}
\]
and $\omega_t^n=fg_t\Omega$. Therefore we have to solve the
following equation
\begin{equation}
\label{sing-n2-linear} V_t\rfloor g_t\Omega
=n\alpha\wedge\omega_t^{n-1}.
\end{equation}
Now we prove that the right hand side of (\ref{sing-n2-linear})
vanishes at $0$.
 It is easy to see that
 \begin{equation}\label{zeroatzero}
 \alpha\wedge\omega_t^{n-1}|_0=\alpha\wedge\omega_0^{n-1}|_0.
 \end{equation}
 The function-germ $f$ is quasi-homogeneous. Let $E$ be the Euler vector field for
 $f$ i.e. $E\rfloor df=f$ and $E|_0=0$ (see \cite{DR}).
 From (\ref{h}) we get that $$
 df\wedge\alpha\wedge
\omega_0^{n-1}=hf\Omega.
$$
Thus
$$
df\wedge\alpha\wedge \omega_0^{n-1}=h(E\rfloor df)\Omega=df\wedge
(hE\rfloor \Omega),
$$
because
$$
(E\rfloor df)\Omega=df\wedge E\rfloor \Omega.
$$
Hence
$$
df\wedge(\alpha\wedge \omega_0^{n-1}-hE\rfloor \Omega)=0.
$$
By
$(2n-1)$-division property of $df$ we get that
$$
\alpha\wedge \omega_0^{n-1}-hE\rfloor \Omega=df\wedge \theta,
$$
where $\theta$ is a $(2n-2)$-form-germ. From
(\ref{zeroatzero}) we get
$$
\alpha\wedge\omega_t^{n-1}|_0=0,
$$
because $E|_0=0$ and $df|_0=0$.
 Hence
we can find a smooth solution $V_t$ of (\ref{sing-n2-linear}) such
that $V_t|_0=0$. Therefore there exit germs of diffeomorphisms
$\Phi_t$, which satisfy (\ref{singhom}). For $t=1$ we have
$\Phi_1^{\star}\omega_1=\omega_0$.
\end{proof}


\end{document}